%%%%%%%%%%%%%%%%%%%%%%%%%%%%%%%%%%%%%%%%%%%%
%%
%% Latex Template for Journal of Coupled
%% Systems and Multiscale Dynamics
%% (Version of 18/02/2013)
%%
%% Copyrighted @ JCSMD
%%
%%%%%%%%%%%%%%%%%%%%%%%%%%%%%%%%%%%%%%%%%%%%
\documentclass[twocolumn]{revtex4}
\usepackage{latexsym}
\usepackage{amsmath}
\usepackage{times}
\usepackage{amssymb}
\usepackage{fancyheadings}
\usepackage[T1]{fontenc}
\usepackage[utf8]{inputenc}
\usepackage{fancyhdr}
\usepackage{url}
\usepackage{hyperref}
\usepackage{color}

\usepackage{multirow}
\usepackage{enumerate}
\usepackage{verbatim}
\usepackage{tikz}
\usetikzlibrary{matrix,arrows}
\usepackage[caption=false]{subfig}
\usepackage[ruled]{algorithm2e}
\usepackage[export]{adjustbox}
\usepackage{etoolbox}
\robustify{\subref}

%%%%%%%%%%%%%%%%%%%%%
\pagestyle{fancy}
%%%%%%%%%%%%%%%%%%%%%
\textwidth=6.8in
\topmargin=-0in
\textheight=9in
\oddsidemargin=-0in
\evensidemargin=-0.in

% if you use PostScript figures in your article
% use the graphics package for simple commands
% \usepackage{graphics}
% or use the graphicx package for more complicated commands
\usepackage{graphicx}
% or use the epsfig package if you prefer to use the old commands
% \usepackage{epsfig}

\graphicspath{{./}}

\allowdisplaybreaks

%%%%%%%%%%%%%%
%\linespread{1.05} % to control space between lines

% definitions used by included articles, reproduced here for
% educational benefit, and to minimize alterations needed to be made
% in developing this sample file.

% some definitions of bold math italics to make typing easier.
% They are used in the Corollary.

%%Emerson Macros
\def \vec#1{{\bf{#1}}}

\makeatletter
\def\hlinewd#1{%
\noalign{\ifnum0=`}\fi\hrule \@height #1 %
\futurelet\reserved@a\@xhline}
\makeatother

\newcommand{\bi}{\begin{itemize}}
\newcommand{\ei}{\end{itemize}}

\newcommand{\diverg}{\vec{\nabla}\cdot}
\newcommand{\director}{\vec{n}}
\newcommand{\curl}{\vec{\nabla}\times}
\newcommand{\Ltwoinner}[3]{\langle #1,#2 \rangle_0}
\newcommand{\Ltwonorm}[2]{\Vert #1 \Vert_0}

\newcommand{\Ltwoinnerndim}[4]{\langle #1,#2 \rangle_0}

\newcommand{\Rone}{\mathbb{R}}

\newcommand{\diff}[1]{\, d#1}

\newcommand{\ltwoinner}[2]{( #1, #2 )}

\newcommand{\Honenorm}[2]{\Vert #1 \Vert_1}

\newcommand{\Hone}[1]{H^1(#1)}

\newcommand{\Honenot}[1]{H^1_0({#1})}

\newcommand{\Honeb}[1]{H^{1}_g(#1)}

\newcommand{\Ltwo}[1]{L^2(#1)}

\newcommand{\Lp}[1]{L^p (\Omega)}

\newcommand{\Linfinity}[1]{L^{\infty}(\Omega)}

\newcommand{\triangulation}{\mathcal{T}_h}

\newcommand{\diam}{\text{diam }}

\setlength{\textfloatsep}{0.2cm}
\setlength{\belowcaptionskip}{0.0cm}
\widowpenalty10000
\clubpenalty10000

%%%%%%%%%%%%%%
% Section numbering style

\def\@begintheorem#1#2{\par\bgroup{\scshape #1\ #2. }\it\ignorespaces}
\def\@opargbegintheorem#1#2#3{\par\bgroup%
   {\scshape #1\ #2\ ({\upshape #3}). }\it\ignorespaces}
\def\@endtheorem{\egroup}

\if@onethmnum
  \newtheorem{theorem}{Theorem}
  \newtheorem{lemma}[theorem]{Lemma}
  \newtheorem{corollary}[theorem]{Corollary}
  \newtheorem{proposition}[theorem]{Proposition}
  \newtheorem{definition}[theorem]{Definition}
\else
  \newtheorem{theorem}{Theorem}[section]
  \newtheorem{lemma}[theorem]{Lemma}
  \newtheorem{corollary}[theorem]{Corollary}
  \newtheorem{proposition}[theorem]{Proposition}
  
\fi

	\fancyhead{}
	\fancyfoot{}
	\fancyhead[C]{September 2017}
	\fancyfoot[RO,LE]{\thepage}

\begin{document}

% Title, authors, addresses, headings/running title

\title{\bf A Posteriori Error Estimators for the Frank-Oseen Model of Liquid Crystals}
%%%%%%%%%%%%%%
\author{D. B. Emerson$^{\dag,}$\vspace*{3mm}}\thanks{Corresponding Author: (david.emerson@tufts.edu).}
%%%%%%%%%%%%%%%%%%%
\affiliation{$^\dag$Department of Mathematics, Tufts University, Medford, MA, United States 02155 }
%%%%%%%%%%%%%%
\date{\today}
%\date{}

%\rhead{\bfseries J. Coupled Syst. Multiscale Dyn.}
%\lhead{Instructions for Authors}

\begin{abstract}
% Text of abstract
\noindent This paper derives a posteriori error estimators for the nonlinear first-order optimality conditions associated with the Frank-Oseen elastic free-energy model of nematic and cholesteric liquid crystals, where the required unit-length constraint is imposed via either a Lagrange multiplier or penalty method. Furthermore, theory establishing the reliability of the proposed error estimator for the penalty method is presented, yielding a concrete upper bound on the approximation error of discrete solutions. The error estimators herein are composed of readily computable quantities on each element of a finite-element mesh, allowing the formulation of an efficient adaptive mesh refinement strategy. Four elastic equilibrium problems are considered to examine the performance of the error estimators and corresponding adaptive mesh refinements against that of a simple uniform refinement scheme. The adapted grids successfully provide significant reductions in computational work while producing solutions that are highly competitive with those of uniform mesh in terms of constraint conformance and computed free energies.
% End of the text

\vspace*{2ex}\noindent\textit{\bf Keywords}: liquid crystal simulation, a posteriori error estimators, adaptive mesh refinement, nested iteration.
\\[3pt]
\noindent\textit{\bf AMS}: 76A15, 65N30, 49M15, 65N22, 65N15
\end{abstract}

\maketitle

\thispagestyle{fancy}

%%%%%%%
\section{Introduction}

Liquid crystals are used and studied in a diverse array of modern applications including display technologies, nanoparticle organization \cite{Lagerwall1}, and liquid crystal infused elastomers used in the production of novel actuator devices such as artificial muscles \cite{Thomsen1} and light-driven motors \cite{Yamada1}, among many others. They are a form of soft material that exhibits mesophases, depending on temperature, with properties intermediate between liquid and crystalline phases. In this paper, we focus on nematic and cholesteric liquid crystals which consist of rod-like molecules with a preferred average orientation at each point denoted by the vector field $\director(x, y, z) = (n_1, n_2, n_3)^T$. In the model considered below, this vector field is subject to a pointwise unit-length constraint throughout a given domain, $\Omega$. Thorough treatments of liquid crystal physics are found in \cite{DeGennes1, Stewart1, Virga1}.

Numerical studies of liquid crystal behavior are a fundamental component in the validation and analysis of experiments, exploration of novel physical phenomena \cite{Emerson2, Atherton1}, and investigation of device design and performance. Many current experiments and technologies require simulations with anisotropic physical constants and intricate boundary conditions on two and three dimensional domains. This paper focuses on liquid crystal simulations performed by solving the first-order optimality conditions derived from the Frank-Oseen elastic free-energy model. As seen in \cite{Emerson1, Emerson2, Emerson3}, this approach yields an effective method for simulating complicated physical phenomena, including flexoelectric effects. However, the variational system resulting from the derivation of the first-order optimality conditions is highly nonlinear. Coupled with the nonlinear pointwise unit-length constraint and the desire to accurately simulate behavior on higher dimensional domains, this motivates the development of an a posteriori error estimator for numerical solutions to the optimality conditions. In the following, error estimators are derived for the optimality conditions arising when the unit-length constraint is imposed with either a Lagrange multiplier or a penalty term.

A posteriori error estimators aim to provide easily computable and reliable bounds on the error of numerical solutions for partial differential equations (PDEs) and variational systems. Accurate error estimators significantly increase the efficiency of numerical methods by facilitating the construction of optimal discretizations via adaptive refinement. Furthermore, such estimators offer a means of objectively measuring the quality of a computed numerical solution. A wealth of research exists for the design and theoretical support of effective error estimators in the context of finite-element methods. This includes techniques treating both linear and nonlinear PDEs across a number of applications \cite{John4, Oden1, Verfurth3, Bank1, Babuska2}. As discussed above, the considered optimality conditions represent nonlinear variational systems. Therefore, in deriving the error estimators and proving reliability in this paper, the general framework for nonlinear PDE estimators constructed by Verf\"{u}rth \cite{Verfurth1, Verfurth2} is employed. In order to demonstrate the efficiency and capability of the proposed estimators, they are applied as part of an adaptive mesh refinement (AMR) strategy to a variety of elastic liquid crystal problems. The adapted grids significantly reduce computational work while yielding solutions that are highly competitive with those of uniform mesh in terms of constraint conformance and computed free energies.

This paper is organized as follows. The Frank-Oseen free-energy model and variational systems for the first-order optimality conditions associated with the two constraint enforcement approaches are introduced in Section \ref{Model}. Section \ref{preliminarytheory} discusses additional notation and prerequisite theoretical results to be applied in the reliability proofs to follow. In Section \ref{errorestimator}, error estimators are constructed for both the penalty and Lagrangian formulations of the variational systems and reliability of the penalty method estimator is proven. An AMR strategy applying the derived error estimators is discussed in Section \ref{numerics}, and a number of numerical experiments are performed demonstrating the accuracy and efficiency of the mesh refinement strategy. Finally, Section \ref{conclusions} provides some concluding remarks and a discussion of future work.
%%%%%%%%%%
%%%%%%%%%%
 \section{Energy Model and Optimality Conditions} \label{Model}

 While a number of liquid crystal models exist \cite{Onsager1, Davis1, Stewart1}, we consider the Frank-Oseen free-energy model where the equilibrium free energy for a domain $\bar{\Omega}$, with coordinates $\bar{\vec{x}} \in \bar{\Omega}$, is characterized by deformations of the nondimensional unit-length director field, $\director$.  Liquid crystal samples tend towards configurations exhibiting minimal free energy. Let $\bar{K}_i$, $i = 1, 2, 3$ be the Frank constants \cite{Frank1} with $\bar{K}_i \geq 0$ \cite{Ericksen2}. Here, we consider the case that each $\bar{K}_i \neq 0$ and define the dimensionless tensor 
 \begin{align*}
 \vec{Z} = \kappa \director \otimes \director + (\vec{I} - \director \otimes \director) = \vec{I} - (1- \kappa) \director \otimes \director,
 \end{align*}
 where $\kappa = \bar{K}_2/\bar{K}_3$. Note that if $\kappa =1$, $\vec{Z}$ is reduced to the identity. The Frank constants are often anisotropic (i.e., $\bar{K}_1 \neq \bar{K}_2 \neq \bar{K}_3$), vary with liquid crystal type, and play important roles in liquid crystal phenomena \cite{Atherton2, Lee1}. 

We denote the classical $\Ltwo{\Omega}$ inner product and norm as $\Ltwoinner{\cdot}{\cdot}{\Omega}$ and $\Ltwonorm{\cdot}{\Omega}$, respectively, for both scalar and vector quantities. Further, let $(\cdot, \cdot)$ and $\vert \cdot \vert$ denote the Euclidean inner product and norm. Throughout this paper, we assume the presence of Dirichlet boundary conditions and, therefore, utilize the null Lagrangian simplification discussed in \cite{Emerson2, Stewart1}. Thus, the Frank-Oseen free energy for cholesteric liquid crystals is written
 \begin{align} \label{FrankOseenFree}
&\int_{\bar{\Omega}} \bar{w}_F \diff{\bar{V}} = \frac{1}{2} \bar{K}_1 \Ltwonorm{\nabla_{\bar{\vec{x}}} \cdot \director}{\Omega}^2 + \bar{K}_2 \Ltwoinner{\bar{t}_0}{\director \cdot \nabla_{\bar{\vec{x}}} \times \director}{\Omega}  \nonumber \\
 & \qquad + \frac{1}{2} \bar{K}_3\Ltwoinnerndim{\vec{Z} \nabla_{\bar{\vec{x}}} \times \director}{\nabla_{\bar{\vec{x}}} \times \director}{\Omega}{3} + \frac{1}{2} \bar{K}_2 \Ltwoinner{\bar{t}_0}{\bar{t}_0}{\Omega},
 \end{align}
 where $\bar{t}_0$ is the wave parameter characterizing the chiral properties of the cholesteric, which may be positive or negative depending on the handedness of the cholesteric \cite{Collings2}, and $\nabla_{\bar{\vec{x}}}$ represents the standard differential operator for $\bar{\Omega}$. The cholesteric free energy in \eqref{FrankOseenFree} represents a generalization of the standard nematic free energy, discussed in \cite{Stewart1, Emerson1}, and collapses to the nematic representation when $\bar{t}_0 = 0$. Therefore, in deriving the error estimators, the general free-energy model is examined and an estimator for the nematic case is recovered by setting $\bar{t}_0 = 0$.

As noted above, the director field is subject to a local unit-length constraint such that $\director \cdot \director = 1$ at each point throughout the domain. In this paper, we consider enforcing this unit-length constraint, as part of an overall energy-minimization framework, with either a penalty approach or a Lagrange multiplier. In order to properly formulate both methods, we first introduce the following nondimensionalization. Let $\mu$ be a fixed length scale and $K$ denote a characteristic Frank constant. We apply the spatial change of variables $\bar{\vec{x}} = \mu \vec{x}$ to \eqref{FrankOseenFree} and divide the resulting expression by $\mu K$. Finally, define nondimensional Frank constants $K_i = \frac{\bar{K}_i}{K}$, $i=1,2,3$ and $t_0 = \mu \bar{t}_0$. Note that the change of variables also scales derivatives. Thus, to compute free-energy minimizing configurations, we define the nondimensionalized free-energy functional, after rescaling by a factor of $2$, as
 \begin{align}
 \mathcal{G}(\director) &= K_1 \Ltwonorm{\diverg \director}{\Omega}^2 + K_3\Ltwoinnerndim{\vec{Z} \curl \director}{\curl \director}{\Omega}{3} \nonumber \\
 &\qquad + 2 K_2 \Ltwoinner{t_0}{\director \cdot \curl \director}{\Omega} \label{EnergyFunctional}
 \end{align}
 for a dimensionless domain $\Omega$ and differential operator $\nabla$. Note that the $K_2 \Ltwoinner{t_0}{t_0}{\Omega}$ term from \eqref{FrankOseenFree} has been dropped as it does not depend on $\director$ and, as such, may be disregarded in the minimization process.
 
Throughout this paper, it is assumed that $\director \in \Honeb{\Omega}^3 = \{\vec{v} \in \Hone{\Omega}^3 : \vec{v} = \vec{g} \text{ on } \partial \Omega \}$, where $\Hone{\Omega}$ represents the classical Sobolev space with norm $\Honenorm{\cdot}{\Omega}$. Here, we assume that $\vec{g}$ satisfies appropriate compatibility conditions. For example, if $\Omega$ has a Lipschitz continuous boundary, it is assumed that $\vec{g} \in H^{\frac{1}{2}}(\partial \Omega)^3$. Note that if $\vec{g} = \vec{0}$, the space $\Honeb{\Omega}^3 = \Honenot{\Omega}^3$ such that the theory of \cite{Emerson1, Emerson3} is applicable.

The penalty method is constructed by augmenting the functional in \eqref{EnergyFunctional} with a weighted, positive term such that
\begin{align}
\mathcal{H}(\director) &= \mathcal{G}(\director) + \zeta \Ltwoinner{\director \cdot \director -1}{\director \cdot \director -1}{\Omega} \label{Dirichletpenaltyfunctional},
\end{align}
where $\mathcal{H}(\director)$ has been nondimensionalized in the same fashion as the free-energy functional and $\zeta > 0$ represents a constant weight penalizing deviations of the solution from the unit-length constraint. The dimensionless parameter $\zeta$ is defined to be $\zeta = \frac{\mu^2 \bar{\zeta}}{K}$. In the limit of large $\zeta$ values, unconstrained minimization of \eqref{Dirichletpenaltyfunctional} is equivalent to the constrained minimization of \eqref{EnergyFunctional}. To minimize the functional $\mathcal{H}(\director)$, first-order optimality conditions are derived as
\begin{align} \label{penaltyFOOC}
&\mathcal{P}(\director) = K_1\Ltwoinner{\diverg \director}{\diverg \vec{v}}{\Omega} + K_3\Ltwoinnerndim{\vec{Z} \curl \director}{\curl \vec{v}}{\Omega}{3} \nonumber  \\
& \qquad+ (K_2-K_3)\Ltwoinner{\director \cdot \curl \director}{\vec{v} \cdot \curl \director}{\Omega}  \nonumber \\
& \qquad+ K_2 t_0 \big(\Ltwoinner{\vec{v}}{\curl \director}{\Omega} + \Ltwoinner{\director}{\curl \vec{v}}{\Omega} \big) \nonumber \\
& \qquad + 2 \zeta \Ltwoinner{\vec{v} \cdot \director}{\director \cdot \director -1}{\Omega} = 0 \qquad \forall \vec{v} \in \Honenot{\Omega}^3
\end{align}
after canceling coefficients of $2$ for convenience.

An alternative approach to enforcing the unit-length constraint is the use of a nondimensionalized Lagrange multiplier where the Lagrangian is defined as
\begin{align*}
\mathcal{L}(\director, \lambda) &= \mathcal{G}(\director) + \int_{\Omega} \lambda(\vec{x})((\director \cdot \director)-1) \diff{V}.
\end{align*}
Computing the associated first-order optimality conditions yields
\begin{align}
& \mathcal{F}(\director, \lambda) = K_1\Ltwoinner{\diverg \director}{\diverg \vec{v}}{\Omega} + K_3\Ltwoinnerndim{\vec{Z}(\director) \curl \director}{\curl \vec{v}}{\Omega}{3} \nonumber \\
& \quad + (K_2-K_3)\Ltwoinner{\director \cdot \curl \director}{\vec{v} \cdot \curl \director}{\Omega} \nonumber \\
& \quad + K_2 t_0 \big(\Ltwoinner{\vec{v}}{\curl \director}{\Omega} + \Ltwoinner{\director}{\curl \vec{v}}{\Omega} \big) + \int_{\Omega} \lambda \ltwoinner{\director}{\vec{v}} \diff{V} \nonumber \\
& \quad + \int_{\Omega} q (\ltwoinner{\director}{\director} -1) \diff{V} = 0, \label{FOOC}
\end{align}
for all $(\vec{v}, q) \in \Honenot{\Omega}^3 \times \Ltwo{\Omega}$. In the sections to follow, an a posteriori error estimator is derived for both the penalty and Lagrange multiplier methods. Furthermore, theory supporting the reliability of the error estimator for the penalty method is proven. While extending this theory to the estimator for the Lagrangian formulations is the subject of future work, numerical results show that both estimators offer significant performance increases.

\section{Additional Notation and Preliminary Theory} \label{preliminarytheory}

In this section, we consider discretizing the variational systems of Section \ref{Model} with finite elements to approximate equilibrium solutions. In preparation for deriving a posteriori error estimators and the associated supporting theory, some additional notation is defined and requisite preliminary theoretical results are discussed. For the remainder of the paper, it is assumed that $\Omega$ is an open, connected domain of $\mathbb{R}^n$, $n \geq 2$, with polyhedral boundary $\Gamma$. For any open subset $\omega \subset \Omega$ with Lipschitz boundary, the corresponding norms are denoted with an index as $\Vert \cdot \Vert_{1, \omega}$ and $\Vert \cdot \Vert_{0, \omega}$. Furthermore, it is assumed that $\Omega$ is subject to a triangulation with a quasi-uniform family of meshes, $\{\triangulation \}$, for $0 < h \leq 1$, satisfying the conditions
\begin{align}
&\max \{\diam T : T \in \triangulation\} \leq h \, \diam \Omega, \nonumber \\
&\min \{\diam B_T : T \in \triangulation\} \geq \rho h \, \diam \Omega \label{quasiuniform},
\end{align}
where $\rho > 0$ is a constant and $B_T$ is the largest ball contained in a given $T$ such that $T$ is star-shaped with respect to $B_T$. We also require that any two elements of $\triangulation$ are either disjoint or share a complete smooth sub-manifold of their boundaries, satisfying the admissibility property for a triangulation. For any $T \in \triangulation$, let $h_T = \diam T$, denote the set of edges for $T$ as $\mathcal{E}(T)$, and for any $E \in \mathcal{E}(T)$, $h_E = \diam E$. The set $\mathcal{N}(T)$ represents the vertices of $T$, and $\mathcal{N}(E)$ is the set of vertices for $E$. The complete set of edges and vertices, respectively, for a triangulation $\triangulation$ is 
\begin{align*}
\mathcal{E}_h = \bigcup_{T \in \triangulation} \mathcal{E}(T), & & \mathcal{N}_h = \bigcup_{T \in \triangulation} \mathcal{N}(T),
\end{align*}
with $\mathcal{E}_{h, \Omega}$ signifying the set of edges excluding those on the boundary, $\Gamma$. It is also assumed that the mesh family is fine enough such that $h_T, h_E \leq 1$.

Note that the quasi-uniformity condition in \eqref{quasiuniform} ensures that for all $T \in \triangulation$ and $E \in \mathcal{E}(T)$, there exist constants such that $C_{l} \leq h_T/h_E \leq C_{u}$ independent of $h$, $T$, and $E$ \cite{Verfurth2}. Furthermore, it implies that the smallest angle of any $T$ is bounded from below by a constant independent of $h$. Finally, let
\begin{align*}
\omega_T &= \bigcup_{\mathcal{E}(T) \cap \mathcal{E}(T') \neq \emptyset} T', & \omega_E &= \bigcup_{E \in \mathcal{E}(T')} T',\\
\tilde{\omega}_T &= \bigcup_{\mathcal{N}(T) \cap \mathcal{N}(T') \neq \emptyset} T', & \tilde{\omega}_E &= \bigcup_{\mathcal{N}(E) \cap \mathcal{N}(T') \neq \emptyset} T'.
\end{align*}
Each of the quantities above represent subdomains of $\Omega$. Let $\hat{T} = \{\hat{x} \in \mathbb{R}^n : \sum_{i = 1}^n \hat{x}_i \leq 1, \hat{x}_j \geq 0, 1 \leq j \leq n \}$ denote a fixed reference element and $\hat{E} = \hat{T} \cap \{ \hat{x} \in \mathbb{R}^n : \hat{x}_n = 0 \}$ a fixed reference edge for the triangulation. The triangulation is assumed to be affine equivalent in the sense that, for any $T \in \triangulation$, there exists an invertible affine mapping from the reference components to $T$.
%$F_T: \hat{T} \rightarrow T$ of the form $x = F_T(\hat{x}) = b_T + A_T \hat{x}$ for $x \in T$ and $\hat{x} \in \hat{T}$, and, for any $E \in \mathcal{E}(T)$, there exists an associated invertible affine mapping $F_E: \hat{E} \rightarrow E$ defined by $x = F_E(\hat{x}) = b_E + A_E \hat{x}$ for $x \in E$ and $\hat{x} \in \hat{E}$. 
For any $E \in \mathcal{E}_{h, \Omega}$ and piecewise continuous function $\phi$, the jump of $\phi$ across $E$ in the direction $\eta_E$ is denoted $[\phi]_E$. Finally, for a given $k \in \mathbb{N}$, define the space
\begin{align*}
S_h^{k, 0} &= \{\phi: \Omega \rightarrow \Rone : \phi \vert_T \in \Pi_k,  \forall T \in \triangulation \} \cap C(\bar{\Omega})
\end{align*}
where $\Pi_k$ is the set of polynomials of degree at most $k$, let $\phi \vert_T$ denote the restriction of $\phi$ to the element $T$, and set $C(\bar{\Omega})$ as the collection of continuous functions on the closure of $\Omega$.

With the notation established above, a number of important supporting results are gathered here and referenced substantially in the reliability proofs of Section \ref{errorestimator}. The first is an approximation error bound for the Cl\'{e}ment interpolation operator \cite{Clement1, Verfurth1}. Denoting the operator $I_h: L^1(\Omega) \rightarrow S_h^{1, 0}$, the following holds for $\triangulation$.

\begin{lemma} \label{clementlemma}
For any $T \in \triangulation$ and $E \in \mathcal{E}_h$
\begin{align*}
\Vert \phi - I_h \phi \Vert_{0, T} &\leq C_1 h_T \Vert \phi \Vert_{1, \tilde{\omega}_T} & \forall \phi \in \Hone{\tilde{\omega}_T}, \\
\Vert \phi - I_h \phi \Vert_{0, E} &\leq C_2 h_E^{1/2} \Vert \phi \Vert_{1, \tilde{\omega}_E} & \forall \phi \in \Hone{\tilde{\omega}_E},
\end{align*}
where $C_1$ and $C_2$ depend only on the quasi-uniformity condition in \eqref{quasiuniform}. 
\end{lemma}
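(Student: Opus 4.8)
The plan is to establish the bounds by the classical averaging-plus-scaling argument underlying the Clément operator, as developed in \cite{Clement1, Verfurth1}. Recall that $I_h \phi \in S_h^{1,0}$ is built from local averages: for each vertex $z \in \mathcal{N}_h$ one sets the nodal value $(I_h\phi)(z) = (\pi_z \phi)(z)$, where $\pi_z$ denotes the $L^2$-orthogonal projection of $\phi$ onto $\Pi_1$ over the nodal patch, and $I_h\phi$ is the continuous piecewise-linear function interpolating these values. The two features I would exploit are that each $\pi_z$ is $L^2$-stable and that $I_h$ reproduces affine functions locally, so that $I_h p = p$ on any $T$ whenever $p \in \Pi_1$.

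First, for the element estimate, fix $T \in \triangulation$ and let $p \in \Pi_1$ be a best local affine approximation of $\phi$ on the patch $\tilde{\omega}_T$. Using local reproduction, I would write $\phi - I_h\phi = (\phi - p) - I_h(\phi - p)$ on $T$ and bound the two contributions separately. The first, $\Vert \phi - p \Vert_{0,T}$, is controlled by a Bramble--Hilbert (Deny--Lions) estimate on $\tilde{\omega}_T$, giving $\Vert \phi - p\Vert_{0,\tilde{\omega}_T} \leq C h_T \vert \phi \vert_{1,\tilde{\omega}_T}$. For the second, I would use $L^2$-stability of the averaging together with the scaling of the nodal basis functions $\lambda_z$ and the quasi-uniformity bound $C_l \leq h_T/h_E \leq C_u$ to obtain $\Vert I_h(\phi-p)\Vert_{0,T} \leq C\Vert \phi - p\Vert_{0,\tilde{\omega}_T}$. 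Combining these yields the first inequality with $C_1$ depending only on \eqref{quasiuniform}.

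Next, for the edge estimate, I would transfer the element bound to $E \in \mathcal{E}_h$ via a scaled trace inequality. Mapping to the reference pair $(\hat{T}, \hat{E})$ and invoking the reference trace theorem $\Vert \hat{v}\Vert_{0,\hat{E}} \leq C(\Vert \hat{v}\Vert_{0,\hat{T}} + \vert \hat{v}\vert_{1,\hat{T}})$, then scaling back through the affine map, gives the estimate $\Vert v\Vert_{0,E}^2 \leq C(h_T^{-1}\Vert v\Vert_{0,T}^2 + h_T \vert v\vert_{1,T}^2)$ for $v$ defined on $T$. Applying this with $v = \phi - I_h\phi$ and inserting both the $L^2$ element bound just proven and its seminorm analogue $\vert \phi - I_h\phi\vert_{1,T} \leq C\Vert \phi\Vert_{1,\tilde{\omega}_T}$ (established by the same averaging-and-scaling argument) produces the $h_E^{1/2}$ factor after using $h_E \approx h_T$, yielding the second inequality.

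The main obstacle is ensuring that all constants are uniform in $h$, $T$, and $E$. This hinges on two points supplied by the hypotheses: affine equivalence to the fixed reference element $\hat{T}$ makes the scaling factors exact powers of $h_T$, and the quasi-uniformity condition \eqref{quasiuniform} guarantees both that the patches $\tilde{\omega}_T$ and $\tilde{\omega}_E$ contain a uniformly bounded number of uniformly shaped elements and that the Bramble--Hilbert and trace constants on these patches are bounded independently of the mesh. Verifying the $h$-independence of the Poincaré-type constant in the Bramble--Hilbert step on the connected patch $\tilde{\omega}_T$ is the most delicate part; the remaining scaling computations are routine bookkeeping.
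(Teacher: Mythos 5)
First, a point of reference: the paper itself does not prove Lemma \ref{clementlemma} --- it is quoted as a known result with citations to Cl\'{e}ment and Verf\"{u}rth \cite{Clement1, Verfurth1} --- so your proposal has to be judged against the standard literature argument. Your treatment of the first (element) estimate is exactly that argument and is sound: local reproduction $I_h p = p$ on $T$ for $p$ affine on $\tilde{\omega}_T$, a Bramble--Hilbert/Poincar\'{e} bound on the patch, and $L^2$-stability of the vertex averaging via scaling of the nodal basis functions, with quasi-uniformity making every constant independent of $h$, $T$, and $E$. You are also right to single out the uniformity of the patch Poincar\'{e} constant as the delicate point.

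The edge estimate, however, has a genuine gap. Deducing it from the element bound through a scaled trace inequality on a containing element $T$ yields the right-hand side $C h_E^{1/2} \Vert \phi \Vert_{1, \tilde{\omega}_T}$, not $C h_E^{1/2} \Vert \phi \Vert_{1, \tilde{\omega}_E}$ as the lemma asserts, and the inclusion between the two patches runs the wrong way for you: since $\mathcal{N}(E) \subset \mathcal{N}(T)$, any element meeting a vertex of $E$ also meets a vertex of $T$, so $\tilde{\omega}_E \subseteq \tilde{\omega}_T$. Your bound is therefore strictly weaker than the stated one (it also requires $\phi \in \Hone{\tilde{\omega}_T}$ rather than merely $\phi \in \Hone{\tilde{\omega}_E}$). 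The standard repair is to argue directly on the edge rather than routing through the element estimate: the restriction of $I_h \phi$ to $E$ is the linear function determined solely by the averaged nodal values at the vertices $z \in \mathcal{N}(E)$, whose vertex patches (the unions of elements containing $z$) lie inside $\tilde{\omega}_E$. So pick $p$ affine on $\tilde{\omega}_E$ (the Deny--Lions quasi-best approximation there), write $\phi - I_h\phi = (\phi - p) - I_h(\phi - p)$ on $E$, bound $\Vert \phi - p \Vert_{0,E}$ by the scaled trace inequality on one element $T \subset \tilde{\omega}_E$ containing $E$ combined with the patch Poincar\'{e} inequality, and bound $\Vert I_h(\phi - p)\Vert_{0,E}$ by the $L^\infty$--$L^2$ scaling of the nodal basis functions on $E$ together with $L^2$-stability of the vertex averages. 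For what it is worth, your weaker $\tilde{\omega}_T$ version would still suffice for the paper's only use of the lemma (in Lemma \ref{penaltyupperBoundRestriction}), since the overlap-counting constant $C_*$ there tolerates either patch family; but it does not prove the lemma as stated.
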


Following the notation in \cite{Verfurth1, Verfurth2}, let $\psi_{\hat{T}}, \psi_{\hat{E}} \\ \in C^{\infty}(\hat{T}, \Rone)$ be cut-off functions such that
\begin{align*}
&0 \leq \psi_{\hat{T}} \leq 1, \quad \max_{\hat{x} \in \hat{T}} \psi_{\hat{T}}(\hat{x}) = 1, \quad \psi_{\hat{T}} = 0 \text{ on } \partial \hat{T}, \\
&0 \leq \psi_{\hat{E}} \leq 1, \quad \max_{\hat{x} \in \hat{E}} \psi_{\hat{E}}(\hat{x}) = 1, \quad \psi_{\hat{E}} = 0 \text{ on } \partial \hat{T} \backslash \hat{E}.
\end{align*}
Define a continuation operator $\hat{P}: L^{\infty}(\hat{E}) \rightarrow L^{\infty}(\hat{T})$ as
\begin{align*}
\hat{P} \hat{u}(\hat{x}_1, \ldots, \hat{x}_n) &:= \hat{u}(\hat{x}_1, \ldots, \hat{x}_{n-1})
\end{align*}
for all $\hat{x} \in \hat{T}, \hat{u} \in L^{\infty}(\hat{E})$, and fix $V_{\hat{T}} \subset L^{\infty}(\hat{T})$ and $V_{\hat{E}} \subset L^{\infty}(\hat{E})$ as two arbitrary finite-dimensional subspaces. Using the affine mappings from reference components to triangulation components, the corresponding functions, $\psi_T$ and $\psi_E$, operator $P: L^{\infty}(E) \rightarrow L^{\infty}(T)$, and spaces $V_T$ and $V_E$ are extended to arbitrary $T \in \triangulation$ and $E \in \mathcal{E}_h$ while preserving the properties discussed above. With these definitions the following lemma and corollary hold, c.f. \cite{Verfurth1, Verfurth2, Brenner1}.

\begin{lemma} \label{cutoffinequalities}
There are constants $C_1, \ldots, C_7$ depending only on the finite-dimensional spaces $V_{\hat{T}}$ and $V_{\hat{E}}$, the functions $\psi_{\hat{T}}$ and $\psi_{\hat{E}}$, and  the quasi-uniform bound of \eqref{quasiuniform} such that for all $T \in \triangulation$, $E \in \mathcal{E}(T)$, $u \in V_T$, and $\sigma \in V_E$
\begin{align}
C_1 \Vert u \Vert_{0, T} &\leq \sup_{v \in V_T} \frac{\int_T u \psi_T v \diff{V}}{\Vert v \Vert_{0, T}} \leq \Vert u \Vert_{0, T}, \label{cutoff1}\\
C_2 \Vert \sigma \Vert_{0, E} &\leq \sup_{\tau \in V_E} \frac{\int_E \sigma \psi_E \tau \diff{S}}{\Vert \tau \Vert_{0, E}} \leq \Vert \sigma \Vert_{0, E}, \label{cutoff2}\\
C_3 h_T^{-1} \Vert \psi_T u \Vert_{0, T} &\leq \Vert \nabla (\psi_T u) \Vert_{0, T} \nonumber \\
& \hspace{3.5em} \leq C_4 h_T^{-1} \Vert \psi_T u \Vert_{0, T}, \label{cutoff3} \\
C_5 h_T^{-1} \Vert \psi_E P\sigma \Vert_{0, T} &\leq \Vert \nabla (\psi_E P \sigma) \Vert_{0, T} \nonumber \\
& \hspace{3.5em} \leq C_6 h_T^{-1} \Vert \psi_E P \sigma \Vert_{0, T}, \label{cutoff4} \\
\Vert \psi_E P \sigma \Vert_{0, T} &\leq C_7 h_T^{1/2} \Vert \sigma \Vert_{0, E}. \label{cutoff5}
\end{align}
\end{lemma}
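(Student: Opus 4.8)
The plan is to establish each of \eqref{cutoff1}--\eqref{cutoff5} first on the fixed reference element $\hat{T}$ (respectively the reference edge $\hat{E}$), where every relevant quantity lives on a finite-dimensional space, and then transport the estimates to an arbitrary $T \in \triangulation$ via the affine map $F_T$ guaranteed by affine equivalence, converting the Jacobian scalings into powers of $h_T$ and $h_E$ through the quasi-uniformity bound \eqref{quasiuniform}. The unifying observation is that the cut-off functions promote the relevant seminorms to genuine norms on finite-dimensional spaces, after which the equivalence of all norms on a finite-dimensional space does the work.

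For the reference estimates I would argue as follows. In \eqref{cutoff1} the upper bound is immediate from $0 \le \psi_{\hat{T}} \le 1$ and Cauchy--Schwarz, while for the lower bound I note that $u \mapsto \sup_{v \in V_{\hat{T}}} \int_{\hat{T}} u \psi_{\hat{T}} v \diff{\hat{V}} / \Vert v \Vert_{0, \hat{T}}$ is a norm on $V_{\hat{T}}$: testing with $v = u$ gives $\int_{\hat{T}} \psi_{\hat{T}} u^2 \diff{\hat{V}}$, which vanishes only when $u \equiv 0$ since $\psi_{\hat{T}} > 0$ on the open element, so finite dimensionality forces this quantity to dominate $C_1 \Vert u \Vert_{0, \hat{T}}$. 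The edge estimate \eqref{cutoff2} is identical with $\hat{E}$, $\psi_{\hat{E}}$, $V_{\hat{E}}$ replacing their volume counterparts. For \eqref{cutoff3}--\eqref{cutoff4} the key point is that $w \mapsto \Vert \nabla w \Vert_{0, \hat{T}}$ is a norm on the finite-dimensional spaces $\psi_{\hat{T}} V_{\hat{T}}$ and $\psi_{\hat{E}} \hat{P} V_{\hat{E}}$: if $\nabla w = 0$ then $w$ is constant, but such a $w$ vanishes on a boundary piece of positive measure ($\partial \hat{T}$ in the first case, $\partial \hat{T} \backslash \hat{E}$ in the second, using $n \ge 2$), forcing $w \equiv 0$; equivalence with $\Vert \cdot \Vert_{0, \hat{T}}$ then yields the reference versions with $O(1)$ constants. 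Finally, \eqref{cutoff5} on the reference follows because $\hat{\sigma} \mapsto \psi_{\hat{E}} \hat{P} \hat{\sigma}$ is linear between finite-dimensional spaces and $\Vert \cdot \Vert_{0, \hat{E}}$ is a norm on $V_{\hat{E}}$, giving $\Vert \psi_{\hat{E}} \hat{P} \hat{\sigma} \Vert_{0, \hat{T}} \le C \Vert \hat{\sigma} \Vert_{0, \hat{E}}$.

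To pass to general $T$ and $E$ I would invoke the change-of-variables identities $\Vert \phi \Vert_{0, T}^2 = \vert \det DF_T \vert \, \Vert \hat{\phi} \Vert_{0, \hat{T}}^2$ and $\Vert \phi \Vert_{0, E}^2 = \vert \det DF_E \vert \, \Vert \hat{\phi} \Vert_{0, \hat{E}}^2$, together with $\nabla_x = (DF_T)^{-T} \nabla_{\hat{x}}$. In \eqref{cutoff1}--\eqref{cutoff2} the common Jacobian factor cancels between numerator and denominator, so the reference constants transfer verbatim. In \eqref{cutoff3}--\eqref{cutoff4} the factor $\Vert (DF_T)^{-1} \Vert \sim h_T^{-1}$ produced by the chain rule, comparable to $h_T^{-1}$ under \eqref{quasiuniform}, supplies exactly the stated weighting. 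In \eqref{cutoff5} the mismatch between the volume scaling $\vert \det DF_T \vert \sim h_T^n$ and the surface scaling $\vert \det DF_E \vert \sim h_E^{n-1}$ leaves a factor $h_T^{n/2} h_E^{-(n-1)/2}$, which collapses to $C_7 h_T^{1/2}$ once the equivalence $h_T \sim h_E$ from quasi-uniformity is used.

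I expect the routine bookkeeping of scaling exponents to be straightforward; the one step requiring genuine care is verifying that each seminorm is in fact a norm on the corresponding reference space, which is precisely where the support and positivity properties of $\psi_{\hat{T}}$ and $\psi_{\hat{E}}$ are indispensable. Without the boundary-vanishing exploited in \eqref{cutoff3}--\eqref{cutoff4}, the constants would lie in the kernel of $\nabla$ and the lower bounds would collapse. This norm-versus-seminorm distinction is the main conceptual obstacle; the remainder is dimensional analysis under affine equivalence and \eqref{quasiuniform}.
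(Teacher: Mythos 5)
The paper never proves this lemma: it is quoted from the literature (``c.f.\ \cite{Verfurth1, Verfurth2, Brenner1}''), so there is no in-paper argument to compare against. Your proposal reconstructs precisely the canonical proof from those references: establish each inequality on the fixed reference element and edge, where the cut-off functions upgrade the relevant seminorms to genuine norms on finite-dimensional spaces so that norm equivalence yields $O(1)$ constants, then transport to arbitrary $T$ and $E$ via the affine maps, with quasi-uniformity \eqref{quasiuniform} converting the Jacobian singular values into the stated powers of $h_T \sim h_E$. Your scaling bookkeeping is correct throughout, including the volume/surface mismatch $h_T^{n/2} h_E^{-(n-1)/2} \sim h_T^{1/2}$ in \eqref{cutoff5}, and your identification of the boundary-vanishing of $\psi_T u$ and $\psi_E P \sigma$ as what kills the constants in the kernels of $\nabla$ for \eqref{cutoff3}--\eqref{cutoff4} is exactly the right conceptual point.

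One fine point you should make explicit: the properties the paper lists for $\psi_{\hat T}$ (namely $0 \le \psi_{\hat T} \le 1$, maximum equal to one, vanishing on $\partial \hat T$) do not by themselves imply the interior positivity you invoke for the lower bound in \eqref{cutoff1}. If $\psi_{\hat T}$ vanished on an open subset of $\hat T$ and the arbitrary finite-dimensional space $V_{\hat T} \subset L^{\infty}(\hat T)$ contained a function supported there, the left-hand inequality of \eqref{cutoff1} would genuinely fail, constants depending on $\psi_{\hat T}$ and $V_{\hat T}$ notwithstanding. So your argument implicitly requires either $\psi_{\hat T} > 0$ on the interior of $\hat T$ (true for the standard bubble functions and for Verf\"urth's construction) or that $V_{\hat T}$ consist of analytic functions such as polynomials, for which vanishing on the open set $\{\psi_{\hat T} > 0\}$ forces vanishing identically; the same remark applies to $\psi_{\hat E}$ in \eqref{cutoff2}. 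This is a defect of the compressed lemma statement inherited from the citation rather than of your proof, but stating the assumption would close the only real gap.
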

Note that with the quasi-uniformity of the triangulation, after proper adjustment of $C_i$ in any of the above inequalities, the mesh constant $h_T$ may be exchanged for $h_E$ while maintaining the inequality.

\begin{corollary} \label{cutoffexpansion}
Under the assumptions of Lemma \ref{cutoffinequalities}, there exists a $\bar{C}_4 > 0$ and $\bar{C}_6 > 0$ such that
\begin{align}
\Vert \psi_T u  \Vert_{1,T} &\leq \bar{C}_4 h_T^{-1} \Vert \psi_T u \Vert_{0, T}, \label{expansion1} \\
\Vert \psi_E P \sigma \Vert_{1, T} &\leq \bar{C}_6 h_T^{-1} \Vert \psi_E P \sigma \Vert_{0, T}. \label{expansion2}
\end{align}
\end{corollary}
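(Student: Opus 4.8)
The plan is to reduce both inequalities directly to the gradient bounds already established in Lemma \ref{cutoffinequalities}, using only the definition of the $H^1(T)$ norm together with the mesh hypothesis $h_T \leq 1$. Recall that for any $w \in \Hone{T}$ one has $\Vert w \Vert_{1,T}^2 = \Vert w \Vert_{0,T}^2 + \Vert \nabla w \Vert_{0,T}^2$, so each claim amounts to controlling the full norm by the $L^2$ norm of the function alone, at the cost of the factor $h_T^{-1}$.

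For the first estimate I would take $w = \psi_T u$ and expand
\begin{align*}
\Vert \psi_T u \Vert_{1,T}^2 = \Vert \psi_T u \Vert_{0,T}^2 + \Vert \nabla(\psi_T u) \Vert_{0,T}^2.
\end{align*}
The upper bound in \eqref{cutoff3} gives $\Vert \nabla(\psi_T u) \Vert_{0,T} \leq C_4 h_T^{-1} \Vert \psi_T u \Vert_{0,T}$, so the right-hand side is at most $(1 + C_4^2 h_T^{-2}) \Vert \psi_T u \Vert_{0,T}^2$. Since the mesh family satisfies $h_T \leq 1$, we have $1 \leq h_T^{-2}$, which lets the lower-order term be absorbed into the scaled gradient term: $1 + C_4^2 h_T^{-2} \leq (1 + C_4^2) h_T^{-2}$. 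Taking square roots yields \eqref{expansion1} with $\bar{C}_4 = \sqrt{1 + C_4^2}$.

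The second estimate follows by the identical argument applied to $w = \psi_E P \sigma$, replacing the appeal to \eqref{cutoff3} by the upper bound in \eqref{cutoff4}; this produces \eqref{expansion2} with $\bar{C}_6 = \sqrt{1 + C_6^2}$.

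I do not expect any genuine obstacle here, since the substance of the corollary is entirely contained in Lemma \ref{cutoffinequalities}. The only point requiring care is the invocation of $h_T \leq 1$ to merge the $L^2$ contribution into the $h_T^{-2}$-scaled gradient contribution, ensuring that the resulting constants $\bar{C}_4$ and $\bar{C}_6$ remain independent of $h$ and that the advertised $h_T^{-1}$ scaling is preserved. Were one to replace the full $H^1$ norm by the seminorm, the result would reduce immediately to \eqref{cutoff3} and \eqref{cutoff4}; it is precisely the lower-order term in the full norm that necessitates this absorption step.
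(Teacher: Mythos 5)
Your proposal is correct and follows essentially the same route as the paper, which simply notes that the corollary ``follows directly from Inequalities \eqref{cutoff3} and \eqref{cutoff4}, respectively, and the fact that $h_T \leq 1$''; your write-up just makes the absorption of the lower-order $L^2$ term explicit and records the resulting constants $\bar{C}_4 = \sqrt{1+C_4^2}$ and $\bar{C}_6 = \sqrt{1+C_6^2}$.
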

\begin{proof}
These inequalities follow directly from Inequalities \eqref{cutoff3} and \eqref{cutoff4}, respectively, and the fact that $h_T \leq 1$.
\end{proof}

Finally, we state two central propositions of Verf\"{u}rth \cite{Verfurth1, Verfurth2}. Consider Banach spaces $X$ and $Y$ with respective norms $\Vert \cdot \Vert_X$ and $\Vert \cdot \Vert_Y$. Let $\mathcal{L}(X, Y)$ correspond to the space of continuous linear maps from $X$ to $Y$ with the natural operator norm $\Vert \cdot \Vert_{\mathcal{L}(X, Y)}$. Further, let $\text{Isom}(X, Y)$ be the set of linear homeomorphisms from $X$ onto $Y$. Define $Y^* = \mathcal{L}(Y, \Rone)$ as the dual space of $Y$ and denote the associated duality pairing as $\langle \cdot, \cdot \rangle$. Let $F \in C^1(X, Y^*)$ be a continuously differentiable function for which a solution $u \in X$ is sought such that
\begin{align}
F(u) = 0. \label{nonlinearProblem}
\end{align}
The derivative of $F$ is written as $DF$. For any $u \in X$ and any real number $R > 0$, the ball centered at $u$ with radius $R$ is defined as $B(u, R) = \{ v \in X : \Vert u - v \Vert_X < R \}$.
\begin{proposition} \label{nonlinearErrorEstimation}
Let $u_0 \in X$ be a regular solution for Equation \eqref{nonlinearProblem} in the sense that $DF(u_0) \in \text{Isom}(X, Y^*)$. Assume that $DF$ is Lipschitz continuous at $u_0$, where there exists an $R_0 > 0$ such that
\begin{align*}
\gamma = \sup_{u \in B(u_0, R_0)} \frac{\Vert DF(u) - DF(u_0) \Vert_{\mathcal{L}(X, Y^*)}}{\Vert u - u_0 \Vert_X} < \infty.
\end{align*}
Set
\begin{align*}
R &= \min \{ R_0, \gamma^{-1} \Vert DF(u_0)^{-1} \Vert^{-1}_{\mathcal{L}(Y^*, X)}, \nonumber \\ 
& \hspace{10em} 2 \gamma^{-1} \Vert DF(u_0) \Vert_{\mathcal{L}(X, Y^*)} \}.
\end{align*}
Then the error estimate
\begin{align*}
& \frac{1}{2} \Vert DF(u_0) \Vert_{\mathcal{L}(X, Y^*)}^{-1} \Vert F(u) \Vert_{Y^*} \leq \Vert u - u_0 \Vert_X \nonumber \\
& \hspace{7em} \leq 2 \Vert DF(u_0)^{-1} \Vert_{\mathcal{L}(Y^*, X)} \Vert F(u) \Vert_{Y^*}
\end{align*}
holds for all $u \in B(u_0, R)$.
\end{proposition}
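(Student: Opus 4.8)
The plan is to derive both inequalities from a single integral (Taylor) representation of $F(u)$ about the regular solution $u_0$, together with a careful absorption of a quadratic remainder term whose smallness is guaranteed by each piece of the definition of $R$. Writing $e = u - u_0$ and using $F(u_0) = 0$ with the fundamental theorem of calculus for the $C^1$ map $F$, I would first establish
\begin{align*}
F(u) = \int_0^1 DF(u_0 + te)\, e \diff{t} = DF(u_0) e + \int_0^1 \big[DF(u_0 + te) - DF(u_0)\big] e \diff{t}.
\end{align*}
Since $B(u_0, R)$ is convex and $R \leq R_0$, the segment $u_0 + te$ lies in $B(u_0, R_0)$ for every $t \in [0,1]$, so the Lipschitz hypothesis applies to each integrand. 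Bounding in the $Y^*$ norm then yields the key remainder estimate
\begin{align*}
\Big\Vert \int_0^1 \big[DF(u_0 + te) - DF(u_0)\big] e \diff{t} \Big\Vert_{Y^*} \leq \int_0^1 \gamma\, t \Vert e \Vert_X^2 \diff{t} = \tfrac{\gamma}{2} \Vert e \Vert_X^2,
\end{align*}
which controls the nonlinearity quadratically in the error.

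For the upper bound on $\Vert e \Vert_X$, I would apply $DF(u_0)^{-1}$ to the representation, take the $X$ norm, and invoke the remainder estimate to obtain
\begin{align*}
\Vert e \Vert_X \leq \Vert DF(u_0)^{-1}\Vert_{\mathcal{L}(Y^*, X)} \Vert F(u) \Vert_{Y^*} + \tfrac{\gamma}{2} \Vert DF(u_0)^{-1}\Vert_{\mathcal{L}(Y^*, X)} \Vert e \Vert_X^2.
\end{align*}
The constraint $R \leq \gamma^{-1} \Vert DF(u_0)^{-1}\Vert^{-1}_{\mathcal{L}(Y^*, X)}$ forces $\tfrac{\gamma}{2}\Vert DF(u_0)^{-1}\Vert_{\mathcal{L}(Y^*, X)} \Vert e \Vert_X < \tfrac12$, so the quadratic term is absorbed into the left-hand side, leaving the factor-of-two upper bound. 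Symmetrically, for the lower bound I would take the $Y^*$ norm of the representation directly and apply the remainder estimate to get $\Vert F(u) \Vert_{Y^*} \leq \Vert DF(u_0)\Vert_{\mathcal{L}(X, Y^*)} \Vert e \Vert_X + \tfrac{\gamma}{2}\Vert e \Vert_X^2$; the constraint $R \leq 2\gamma^{-1}\Vert DF(u_0)\Vert_{\mathcal{L}(X, Y^*)}$ ensures $\tfrac{\gamma}{2}\Vert e \Vert_X < \Vert DF(u_0)\Vert_{\mathcal{L}(X, Y^*)}$, so the quadratic term is dominated by the linear one, producing the stated lower bound.

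The routine parts are the triangle inequalities and the norm bound for a vector-valued integral; the conceptual core, and the only place genuine care is required, is the absorption step. The three terms in the definition of $R$ are engineered precisely so that on $B(u_0, R)$ the quadratic remainder is strictly smaller than half the relevant linear quantity, which is exactly what closes each inequality with the constant $2$. The main obstacle I anticipate is verifying that the integral representation is legitimate in the $Y^*$-valued setting and that the Lipschitz bound may be applied pointwise along the segment; both follow from $F \in C^1(X, Y^*)$ and convexity of the ball, but should be stated explicitly to justify interchanging the norm with the integral.
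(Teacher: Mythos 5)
Your proof is correct: the integral representation of $F(u)$ about $u_0$, the quadratic remainder bound $\tfrac{\gamma}{2}\Vert u - u_0\Vert_X^2$ obtained from the Lipschitz hypothesis along the segment, and the two absorption steps keyed to the second and third terms in the definition of $R$ are exactly how this estimate is established. Note that the paper itself states this proposition without proof, citing Verf\"{u}rth; your argument is essentially a faithful reconstruction of the standard proof from that reference, so there is nothing to flag beyond the routine justifications (Bochner-type integral of a continuous operator-valued map, convexity of the ball) that you already identified.
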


Let $X_h \subset X$ and $Y_h \subset Y$ be finite-dimensional subspaces and $F_h \in C(X_h, Y_h^*)$ be an approximation of $F$. Consider the discretized problem of finding $u_h \in X_h$ such that
\begin{align}
F_h(u_h) = 0. \label{finitedimNonlinearProblem}
\end{align}
\vspace{-1.5em}
\begin{proposition} \label{auxiliarySpaceInequality}
Let $u_h \in X_h$ be an approximate solution for Equation \eqref{finitedimNonlinearProblem} in the sense that $\Vert F_h(u_h) \Vert_{Y_h^*}$ is ``small''. Assume that there is a restriction operator $R_h \in \mathcal{L}(Y, Y_h)$, a finite-dimensional space $\tilde{Y}_h \subset Y$, and an approximation $\tilde{F}_h : X_h \rightarrow Y^*$ of $F$ at $u_h$ such that 
\begin{align*}
\Vert (\text{Id}_Y - R_h)^*\tilde{F}_h(u_h) \Vert_{Y^*} \leq C_0 \Vert \tilde{F}_h (u_h) \Vert_{\tilde{Y}_h^*},
\end{align*}
where $\text{Id}_Y$ is the identity operator on $Y$ and $C_0 > 0$ is independent of $h$. Then the following estimate holds.
\begin{align*}
\Vert F(u_h) \Vert_{Y^*} &\leq C_0 \Vert \tilde{F}_h (u_h) \Vert_{\tilde{Y}^*_h} \nonumber \\
& \hspace{1em} + \Vert (\text{Id}_Y - R_h)^*[F(u_h) - \tilde{F}_h(u_h)] \Vert_{Y^*} \\
& \hspace{1em} + \Vert R_h \Vert_{\mathcal{L}(Y, Y_h)} \Vert F(u_h) - F_h(u_h) \Vert_{Y_h^*} \nonumber \\
& \hspace{1em} + \Vert R_h \Vert_{\mathcal{L}(Y, Y_h)} \Vert F_h(u_h) \Vert_{Y_h^*}.
\end{align*}
\end{proposition}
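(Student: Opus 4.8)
The plan is to derive the four-term bound entirely from a single algebraic decomposition of the functional $F(u_h) \in Y^*$ together with repeated applications of the triangle inequality. None of the nonlinear structure of $F$ from Proposition \ref{nonlinearErrorEstimation} is needed here; only the linearity of the duality pairing and the one standing hypothesis on $(\text{Id}_Y - R_h)^*\tilde{F}_h(u_h)$ enter.

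First I would use that $Y_h \subset Y$ to regard $R_h$ as an operator from $Y$ into $Y$ with range in $Y_h$, so that $\text{Id}_Y - R_h$ is a well-defined endomorphism of $Y$ and its dual $(\text{Id}_Y - R_h)^*$ acts on $Y^*$. Testing $F(u_h)$ against an arbitrary $y \in Y$ and splitting $y = (\text{Id}_Y - R_h)y + R_h y$ gives the identity
\[
F(u_h) = (\text{Id}_Y - R_h)^* F(u_h) + R_h^*\big(F(u_h)|_{Y_h}\big)
\]
in $Y^*$, where $F(u_h)|_{Y_h} \in Y_h^*$ is the restriction of the functional to the subspace and $R_h^* \in \mathcal{L}(Y_h^*, Y^*)$ is the dual of $R_h$. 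This decomposition is the one structural step; everything that follows is bookkeeping with the triangle inequality.

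Next I would bound the two summands separately. For the first, I insert $\tilde{F}_h(u_h)$ by writing $(\text{Id}_Y - R_h)^* F(u_h) = (\text{Id}_Y - R_h)^*\tilde{F}_h(u_h) + (\text{Id}_Y - R_h)^*[F(u_h) - \tilde{F}_h(u_h)]$ and applying the triangle inequality; the standing hypothesis bounds the first piece by $C_0 \Vert \tilde{F}_h(u_h) \Vert_{\tilde{Y}_h^*}$, producing the first two terms of the claim. For the second summand, I use the operator-norm estimate $\Vert R_h^*(F(u_h)|_{Y_h}) \Vert_{Y^*} \leq \Vert R_h^* \Vert_{\mathcal{L}(Y_h^*, Y^*)} \Vert F(u_h)|_{Y_h} \Vert_{Y_h^*}$ and then insert $F_h(u_h)$ via $F(u_h)|_{Y_h} = [F(u_h)|_{Y_h} - F_h(u_h)] + F_h(u_h)$, which yields the remaining two terms. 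Summing the four contributions completes the estimate.

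The only point requiring care, and the nearest thing to an obstacle, is replacing the dual-operator norm $\Vert R_h^* \Vert_{\mathcal{L}(Y_h^*, Y^*)}$ that arises naturally with the operator norm $\Vert R_h \Vert_{\mathcal{L}(Y, Y_h)}$ appearing in the statement; these coincide by the standard Banach-space fact that a bounded linear map and its adjoint have equal norm. I would also fix at the outset the mild abuse of notation by which $\Vert F(u_h) - F_h(u_h) \Vert_{Y_h^*}$ means the difference of $F_h(u_h) \in Y_h^*$ with the \emph{restriction} of $F(u_h)$ to $Y_h$, since $F(u_h)$ itself lives in $Y^*$. With that convention fixed, the four bounding terms match the claim verbatim.
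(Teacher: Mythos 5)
Your proof is correct, and since the paper states this proposition without proof (it is quoted as one of Verf\"{u}rth's results), there is no in-paper argument to compare against; your decomposition of $F(u_h)$ via $\text{Id}_Y = (\text{Id}_Y - R_h) + R_h$, followed by inserting $\tilde{F}_h(u_h)$ in the first piece and $F_h(u_h)$ in the second and applying the triangle inequality, is precisely the argument in Verf\"{u}rth's original papers. One minor remark: you only need the elementary direction $\Vert R_h^* g \Vert_{Y^*} \leq \Vert R_h \Vert_{\mathcal{L}(Y, Y_h)} \Vert g \Vert_{Y_h^*}$, which is immediate because $Y_h$ carries the norm inherited from $Y$, rather than the full adjoint-norm equality, so no appeal to Hahn--Banach is required.
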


\section{A Posteriori Error Estimators} \label{errorestimator}

In this section, an a posteriori error estimator is derived for the nonlinear variational problem in \eqref{penaltyFOOC}, representing the first-order optimality conditions of the penalty method discussed in Section \ref{Model}, and theory supporting its reliability as an estimator is shown. In addition, we propose an error estimator for the first-order optimality conditions in \eqref{FOOC} associated with the Lagrange multiplier approach. %While modifications to the theoretical machinery introduced in Section \ref{preliminarytheory} to prove reliability of the estimator in this context are the subject of current work, the proposed error estimator demonstrates good performance in numerical results reported below.

Considering the first-order optimality conditions for the penalty method in \eqref{penaltyFOOC}, set $Y^* = \left (\Honenot{\Omega}^3 \right )^*$ and $X = \Honeb{\Omega}^3$. Therefore, $\mathcal{P}(\director) \in C^1(X, Y^*)$. In order to construct an approximate solution to \eqref{penaltyFOOC}, we consider a general discretization of the form
\begin{align*}
[S_h^{1, 0}]^3 \subset V_h \subset [S_h^{s, 0}]^3,
\end{align*}
for $s \geq 1$ and define the finite-dimensional space $Y_h = \{\vec{v}_h \in V_h : \vec{v}_h = 0 \text{ on } \Gamma \}$. For the theory presented here, we assume that the boundary conditions for $\director$ are exactly representable in the finite-element space $V_h$ on the coarsest grid of $\{\triangulation\}$. Note that this restriction on the boundary conditions admits projection of the boundary function $\vec{g}$ onto the coarsest mesh of $\{\triangulation\}$. At each level of refinement, the boundary conditions are projected onto the refined mesh and, therefore, the analysis to follow applies to the additionally refined levels. Thus, the analysis herein estimates the error arising from discrete approximation of solutions to \eqref{penaltyFOOC} on the interior of $\Omega$ but not from approximation of the boundary conditions. Hence, set $X_h = V_h \cap X$. For $\vec{v} \in Y$ and $\langle \mathcal{P}(\director), \vec{v} \rangle$, define
\begin{align*}
\langle \mathcal{P}_h(\director_h), \vec{v}_h \rangle = \langle \mathcal{P}(\director_h), \vec{v}_h \rangle,
\end{align*}
where $\director_h \in X_h$ and $\vec{v}_h \in Y_h$.

Let $\director_h$ be a solution of
\begin{align} \label{penaltyDiscreteFOOC}
\mathcal{P}_h(\director_h) = 0 & & \forall \vec{v}_h \in Y_h.
\end{align}
For each $T \in \triangulation$, denoting the outward facing normal of $E \in \mathcal{E}(T)$ as $\eta_E$, integrating by parts elementwise, and using the fact that $\vec{v} = 0$ on $\Gamma$ yields
\begin{widetext}
\begin{align*}
\langle \mathcal{P}(\director_h), \vec{v} \rangle &= K_1 \sum_{T \in \triangulation} \int_T -\nabla (\diverg \director_h) \cdot \vec{v} \diff{V} + K_1 \sum_{E \in \mathcal{E}_{h, \Omega}} \int_E [(\diverg \director_h) \eta_E]_E \cdot \vec{v} \diff{S} \nonumber \\
& \quad + K_3 \sum_{T \in \triangulation} \int_T \curl (\vec{Z}(\director_h) \curl \director_h) \cdot \vec{v} \diff{V} + K_3 \sum_{E \in \mathcal{E}_{h, \Omega}} \int_E [(\vec{Z}(\director_h) \curl \director_h) \times \eta_E]_E \cdot \vec{v} \diff{S} \nonumber \\
& \quad + (K_2 - K_3) \sum_{T \in \triangulation} \int_T ((\director_h \cdot \curl \director_h) \curl \director_h) \cdot \vec{v} \diff{V} + 2 K_2 t_0 \sum_{T \in \triangulation} \int_T (\curl \director_h) \cdot \vec{v} \diff{V} \nonumber \\
& \quad + 2 \zeta \sum_{T \in \triangulation} \int_T ((\director_h \cdot \director_h - 1) \director_h) \cdot \vec{v}  \diff{V}.
\end{align*}
\end{widetext}
Define a restriction operator $R_h: Y \rightarrow Y_h$ as $R_h[\vec{u}] = [I_h u_1, I_h u_2, I_h u_3]$ where $I_h$ is the Cl\'{e}ment operator of Lemma \ref{clementlemma}. As there is no forcing function or Neumann boundary conditions and the Dirichlet boundary is exactly captured by the finite-element space, set
\begin{align*}
\langle \tilde{\mathcal{P}}_h(\director_h), \vec{v} \rangle = \langle \mathcal{P}(\director_h), \vec{v} \rangle.
\end{align*}
Note that this immediately implies that for the quantities from Proposition \ref{auxiliarySpaceInequality}:
\begin{align}
\Vert (\text{Id}_Y - R_h)^*[\mathcal{P}(\director_h) - \tilde{\mathcal{P}}_h(\director_h)] \Vert_{Y^*} &= 0, \label{penaltyzero1} \\
\Vert \mathcal{P}(\director_h) - \mathcal{P}_h(\director_h) \Vert_{Y_h^*} &= 0. \label{penaltyzero2}
\end{align}
For any $T \in \triangulation$, define
\begin{widetext}
\begin{align*}
\Theta_T &= \Bigg \{ h_T^2 \big \Vert -K_1 \nabla (\diverg \director_h) + K_3 \curl (\vec{Z}(\director_h) \curl \director_h) + (K_2 - K_3) (\director_h \cdot \curl \director_h) \curl \director_h + 2K_2 t_0 (\curl \director_h) \nonumber \\
& \qquad + 2 \zeta (\director_h \cdot \director_h - 1) \director_h \big \Vert_{0, T}^2 + \sum_{E \in \mathcal{E}(T) \cap \mathcal{E}_{h, \Omega}} h_E \big \Vert [K_1 (\diverg \director_h) \eta_E  + K_3 (\vec{Z}(\director_h) \curl \director_h) \times \eta_E]_E \big \Vert_{0, E}^2 \Bigg \}^{1/2}.
\end{align*}
\end{widetext}
As is shown below, $\Theta_T$ constitutes a reliable and constructible local error estimator for each element of the discretization. 

For the definitions of $\tilde{\mathcal{P}}(\director_h)$ and $\Theta_T$ above, the following lemma holds. \\ \newpage
\begin{lemma} \label{penaltyupperBoundRestriction} 
There exists a constant $C > 0$ independent of $h$ such that
\begin{align*}
\Vert (\text{Id}_Y - R_h)^* \tilde{\mathcal{P}}_h(\director_h) \Vert_{Y^*} \leq C \left( \sum_{T \in \triangulation} \Theta_T^2 \right)^{1/2}.
\end{align*}
\end{lemma}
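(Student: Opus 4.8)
The plan is to test the residual functional against an arbitrary $\vec{v} \in Y$, split it into computable element-interior and edge contributions, and control the interpolation error introduced by $R_h$ through the Cl\'ement estimate of Lemma \ref{clementlemma}. First I would use the definition of the adjoint restriction together with the identity $\tilde{\mathcal{P}}_h(\director_h) = \mathcal{P}(\director_h)$ to write, for any $\vec{v} \in Y$,
\begin{align*}
\langle (\text{Id}_Y - R_h)^* \tilde{\mathcal{P}}_h(\director_h), \vec{v} \rangle = \langle \mathcal{P}(\director_h), \vec{v} - R_h \vec{v} \rangle.
\end{align*}
Since $\vec{v} - R_h \vec{v}$ vanishes on $\Gamma$, I substitute the elementwise integrated-by-parts expression for $\langle \mathcal{P}(\director_h), \cdot \rangle$ derived above. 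This yields a sum over $T \in \triangulation$ of volume integrals of the element residual $\vec{r}_T$ (the quantity in the first norm of $\Theta_T$) against $\vec{v} - R_h \vec{v}$, plus a sum over interior edges of the jump $\vec{j}_E$ (the quantity in the edge norm of $\Theta_T$) against $\vec{v} - R_h \vec{v}$.

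Next I would bound each term by the Cauchy--Schwarz inequality. On each element, $|\int_T \vec{r}_T \cdot (\vec{v} - R_h \vec{v}) \diff{V}| \leq \Vert \vec{r}_T \Vert_{0,T}\, \Vert \vec{v} - R_h \vec{v} \Vert_{0,T}$, and Lemma \ref{clementlemma}, applied to each component and summed, gives $\Vert \vec{v} - R_h \vec{v} \Vert_{0,T} \leq C_1 h_T \Vert \vec{v} \Vert_{1, \tilde{\omega}_T}$. Similarly, on each interior edge, $|\int_E \vec{j}_E \cdot (\vec{v} - R_h \vec{v}) \diff{S}| \leq C_2 h_E^{1/2} \Vert \vec{j}_E \Vert_{0,E}\, \Vert \vec{v} \Vert_{1, \tilde{\omega}_E}$. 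The weights $h_T$ and $h_E^{1/2}$ produced here are exactly the scalings appearing inside $\Theta_T$.

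I would then assemble the global estimate with a discrete Cauchy--Schwarz inequality over the element and edge sums, separating factors of the form $(\sum_T h_T^2 \Vert \vec{r}_T \Vert_{0,T}^2)^{1/2}$ and $(\sum_E h_E \Vert \vec{j}_E \Vert_{0,E}^2)^{1/2}$ from factors $(\sum_T \Vert \vec{v} \Vert_{1, \tilde{\omega}_T}^2)^{1/2}$ and $(\sum_E \Vert \vec{v} \Vert_{1, \tilde{\omega}_E}^2)^{1/2}$. The essential geometric input is the finite-overlap property that follows from the quasi-uniformity assumption \eqref{quasiuniform}: the number of patches $\tilde{\omega}_T$ (respectively $\tilde{\omega}_E$) containing any fixed element is bounded independently of $h$, so $\sum_T \Vert \vec{v} \Vert_{1, \tilde{\omega}_T}^2 \leq C \Vert \vec{v} \Vert_{1, \Omega}^2$ and likewise for the edge patches. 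Dividing by $\Vert \vec{v} \Vert_{1,\Omega}$ and taking the supremum over $\vec{v} \in Y$ then produces the asserted operator-norm bound.

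The main obstacle I anticipate is the bookkeeping required to fold the edge sum back into the element-indexed estimator $\Theta_T$. Since each interior edge is shared by exactly two elements, I would reassign each edge term to its neighboring elements and invoke the quasi-uniform equivalence $C_l \leq h_T/h_E \leq C_u$, together with the freedom noted after Lemma \ref{cutoffinequalities} to interchange $h_T$ and $h_E$, in order to absorb the edge contributions into the definition of $\Theta_T$. Some care is also needed to verify that the finite-overlap constant and the Cl\'ement constants $C_1, C_2$ combine into a single $h$-independent constant $C$ across both the volume and edge terms; each is a consequence of shape regularity, but they must be tracked consistently.
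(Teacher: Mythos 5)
Your proposal is correct and follows essentially the same route as the paper's proof: duality pairing with the elementwise integrated-by-parts residual, componentwise Cauchy--Schwarz with the Cl\'ement estimates of Lemma \ref{clementlemma}, a discrete Cauchy--Schwarz over elements and edges, and the finite-overlap bound $\left( \sum_T \Vert \vec{v} \Vert_{1,\tilde{\omega}_T}^2 + \sum_E \Vert \vec{v} \Vert_{1,\tilde{\omega}_E}^2 \right)^{1/2} \leq C_* \Vert \vec{v} \Vert_1$ before taking the supremum. The only superfluous step is your invocation of the $h_T/h_E$ equivalence to absorb edge terms: since $\Theta_T$ is defined with $h_E$ weights on its edge contributions and each interior edge belongs to exactly two elements, the edge sum folds into $\sum_T \Theta_T^2$ directly, exactly as the paper observes.
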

\begin{proof}
First note that
\begin{widetext}
\vspace{-1.5em}
\begin{align}
& \Vert (\text{Id}_Y - R_h)^* \tilde{\mathcal{P}}_h(\director_h) \Vert_{Y^*} = \sup_{\substack{\vec{v} \in Y \\ \Vert \vec{v} \Vert_Y = 1}} \sum_{T \in \triangulation} \sum_{i = 1}^3 \int_T \Big( -K_1 (\nabla(\diverg \director_h))_i + K_3(\curl (\vec{Z}(\director_h) \curl \director_h))_i \nonumber \\
& \hspace{5em}  + (K_2 - K_3) ((\director_h \cdot \curl \director_h) \curl \director_h)_i  + 2 K_2 t_0  (\curl \director_h)_i + 2 \zeta ((\director_h \cdot \director_h - 1) \director_h)_i \Big) \big( v_i - I_h v_i \big) \diff{V} \nonumber \\
& \hspace{5em}  + \sum_{E \in \mathcal{E}_{h, \Omega}} \sum_{i=1}^3 \int_E [K_1 ((\diverg \director_h) \eta_E)_i + K_3 ((\vec{Z}(\director_h) \curl \director_h) \times \eta_E)_i]_E \cdot (v_i - I_h v_i) \diff{S}. \label{PenaltyRestrictionSupDefinition}
\end{align}
Applying the Cauchy-Schwarz inequality and Lemma \ref{clementlemma} to each component in \eqref{PenaltyRestrictionSupDefinition},
\begin{align}
&\Vert (\text{Id}_Y - R_h)^* \tilde{\mathcal{P}}_h(\director_h) \Vert_{Y^*} \leq  \sup_{\substack{\vec{v} \in Y \\ \Vert \vec{v} \Vert_Y = 1}} \sum_{T \in \triangulation} \sum_{i = 1}^3 \bigg \{ \big \Vert -K_1 (\nabla(\diverg \director_h))_i + K_3(\curl (\vec{Z}(\director_h) \curl \director_h))_i \nonumber \\
& \hspace{4em}  + (K_2 - K_3) ((\director_h \cdot \curl \director_h) \curl \director_h)_i + 2 K_2 t_0  (\curl \director_h)_i  + 2 \zeta ((\director_h \cdot \director_h - 1) \director_h)_i \big \Vert_{0, T} \cdot C_1 h_T \Vert v_i \Vert_{1, \tilde{\omega}_T} \bigg \} \nonumber \\
& \hspace{4em} + \sum_{E \in \mathcal{E}_{h, \Omega}} \sum_{i=1}^3 \big \Vert [K_1 ((\diverg \director_h) \eta_E)_i + K_3 ((\vec{Z}(\director_h) \curl \director_h) \times \eta_E)_i]_E \big \Vert_{0, E} \cdot  C_2 h_E^{1/2} \Vert v_i \Vert_{1, \tilde{\omega}_E} \nonumber \\
& \hspace{10.5em} \leq  \sup_{\substack{\vec{v} \in Y \\ \Vert \vec{v} \Vert_Y = 1}} \max(C_1, C_2) \Bigg ( \sum_{T \in \triangulation} \sum_{i = 1}^3 \bigg \{ h_T^2 \big \Vert -K_1 (\nabla(\diverg \director_h))_i + K_3(\curl (\vec{Z}(\director_h) \curl \director_h))_i \nonumber \\
& \qquad \qquad + (K_2 - K_3) ((\director_h \cdot \curl \director_h) \curl \director_h)_i + 2 K_2 t_0  (\curl \director_h)_i + 2 \zeta ((\director_h \cdot \director_h - 1) \director_h)_i \big \Vert_{0, T}^2 \bigg \} \nonumber \\
& \hspace{4em} + \sum_{E \in \mathcal{E}_{h, \Omega}} \sum_{i=1}^3 h_E \big \Vert [K_1 ((\diverg \director_h) \eta_E)_i + K_3 ((\vec{Z}(\director_h) \curl \director_h) \times \eta_E)_i]_E \big \Vert_{0, E}^2 \Bigg )^{1/2} \nonumber \\
& \hspace{4em} \cdot \Bigg( \sum_{T \in \triangulation} \sum_{i=1}^3 \Vert v_i \Vert^2_{1, \tilde{\omega}_T} + \sum_{E \in \mathcal{E}_{h, \Omega}} \sum_{i=1}^3 \Vert v_i \Vert_{1, \tilde{\omega}_E}^2 \Bigg )^{1/2} \label{CSInequalityForSums}
\end{align}
where the last inequality of \eqref{CSInequalityForSums} is given by the Cauchy-Schwarz inequality for sums. Finally, there exists $C_* > 0$, independent of $h$ and taking into account repeated elements in the sums, such that
\begin{align*}
\left ( \sum_{T \in \triangulation} \Vert w \Vert_{1, \tilde{w}_T}^2 + \sum_{E \in \mathcal{E}_{h, \Omega}} \Vert w \Vert_{1, \tilde{w}_E}^2 \right)^{1/2} \leq C_* \Vert w \Vert_1.
\end{align*}
Applying the above inequality to \eqref{CSInequalityForSums} yields
\begin{align*}
&\Vert (\text{Id}_Y - R_h)^* \tilde{\mathcal{P}}_h(\director_h) \Vert_{Y^*} \\
& \leq \sup_{\substack{\vec{v} \in Y \\ \Vert \vec{v} \Vert_Y = 1}} C_* \max(C_1, C_2) \Vert \vec{v} \Vert_Y \Bigg ( \sum_{T \in \triangulation} \sum_{i = 1}^3 \bigg \{ h_T^2 \big \Vert -K_1 (\nabla(\diverg \director_h))_i  + K_3(\curl (\vec{Z}(\director_h) \curl \director_h))_i \nonumber \\
& \qquad \qquad + (K_2 - K_3) ((\director_h \cdot \curl \director_h) \curl \director_h)_i + 2 K_2 t_0  (\curl \director_h)_i + 2 \zeta ((\director_h \cdot \director_h - 1) \director_h)_i \big \Vert_{0, T}^2 \bigg \} \nonumber \\
& \qquad \qquad + \sum_{E \in \mathcal{E}_{h, \Omega}} \sum_{i=1}^3 h_E \big \Vert [K_1 ((\diverg \director_h) \eta_E)_i + K_3 ((\vec{Z}(\director_h) \curl \director_h) \times \eta_E)_i]_E \big \Vert_{0, E}^2 \Bigg )^{1/2}.
\end{align*}
\end{widetext}
Noting that the jump components in the bound are summed over $E \in \mathcal{E}_{h, \Omega}$, this implies that
\begin{align*}
\Vert (\text{Id}_Y - R_h)^* \tilde{\mathcal{P}}_h(\director_h) \Vert_{Y^*} \leq C \left ( \sum_{T \in \triangulation} \Theta_T^2 \right)^{1/2}.
\end{align*}
\end{proof}

Next, define the subspace $\tilde{Y}_h \subset Y$ as
\begin{align*}
\tilde{Y}_h = \text{span } \{\psi_T \vec{v}, &\text{ } \psi_E P \sigma : \vec{v} \in [\Pi_{k \vert T}]^3, \nonumber \\
&\sigma \in [\Pi_{k \vert E}]^3, \text{ } T \in \triangulation, E \in \mathcal{E}_{h, \Omega} \},
\end{align*}
such that $k \geq 3s$, $\psi_T$ and $\psi_E$ are cutoff functions, and $P$ is the continuation operator as defined in Section \ref{preliminarytheory}. For this subspace, the lemma below holds.
\begin{lemma} \label{penaltyupperBoundTildeY}
There exists a constant $C > 0$ independent of $h$ such that
\begin{align*}
\Vert \tilde{\mathcal{P}}_h(\director_h) \Vert_{\tilde{Y}_h^*} \leq C \left (\sum_{T \in \triangulation} \Theta_T^2 \right)^{1/2}.
\end{align*}
\end{lemma}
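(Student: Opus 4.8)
The plan is to mirror the structure of the proof of Lemma \ref{penaltyupperBoundRestriction}, expressing the dual norm as a supremum and reducing it to the element and edge residual contributions that already make up $\Theta_T$. Writing $\Vert \tilde{\mathcal{P}}_h(\director_h) \Vert_{\tilde{Y}_h^*} = \sup\{ \langle \mathcal{P}(\director_h), \vec{w} \rangle : \vec{w} \in \tilde{Y}_h, \, \Vert \vec{w} \Vert_Y = 1 \}$ and invoking the elementwise integration-by-parts identity already established for $\langle \mathcal{P}(\director_h), \cdot \rangle$, I would put $\langle \mathcal{P}(\director_h), \vec{w} \rangle = \sum_{T \in \triangulation} \int_T \vec{r}_T \cdot \vec{w} \diff{V} + \sum_{E \in \mathcal{E}_{h, \Omega}} \int_E \vec{j}_E \cdot \vec{w} \diff{S}$, where $\vec{r}_T$ is the interior residual and $\vec{j}_E$ the jump residual whose weighted norms define $\Theta_T$. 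Since $\tilde{Y}_h \subset \Honenot{\Omega}^3$ and $\Vert \vec{w} \Vert_Y = \Vert \vec{w} \Vert_1$, this identity is valid for every test function $\vec{w} \in \tilde{Y}_h$.

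Next I would apply the Cauchy--Schwarz inequality on each element and each edge, insert the mesh weights $h_T, h_T^{-1}$ and $h_E^{1/2}, h_E^{-1/2}$, and then apply the discrete Cauchy--Schwarz inequality for sums, exactly as in \eqref{CSInequalityForSums}. Absorbing the bounded edge multiplicity and the quasi-uniform equivalence $h_T \sim h_E$ into the constant, this factors the estimate as $\langle \mathcal{P}(\director_h), \vec{w} \rangle \leq C \big( \sum_{T \in \triangulation} \Theta_T^2 \big)^{1/2} W(\vec{w})$, with $W(\vec{w}) = \big( \sum_{T \in \triangulation} h_T^{-2} \Vert \vec{w} \Vert_{0, T}^2 + \sum_{E \in \mathcal{E}_{h, \Omega}} h_E^{-1} \Vert \vec{w} \Vert_{0, E}^2 \big)^{1/2}$. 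The lemma then reduces to the uniform bound $W(\vec{w}) \leq C \Vert \vec{w} \Vert_1$ for $\vec{w} \in \tilde{Y}_h$.

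The crux, and the step I expect to be the main obstacle, is precisely this weighted bound. It is false for a generic $H^1$ function, since an inverse-type, $h^{-1}$-weighted $L^2$ norm cannot be controlled by the $H^1$ norm without additional structure; the argument must therefore exploit that every $\vec{w} \in \tilde{Y}_h$ is a combination of the cut-off bubbles $\psi_T \vec{v}$ and $\psi_E P \sigma$. I would decompose $\vec{w}$ over the bubble basis and treat the two families separately. Each element bubble vanishes on all of $\partial T$ and each edge bubble vanishes on $\partial T \backslash E$, so the Friedrichs-type lower bounds contained in \eqref{cutoff3} and \eqref{cutoff4} give $\Vert \cdot \Vert_{0, T} \leq C h_T \Vert \cdot \Vert_{1, T}$ for each such bubble, supplying exactly the missing power of $h_T$. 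Summing over the bounded number of bubbles supported on each element (shape regularity) and using the $H^1$-stability of the bubble decomposition controls $\sum_{T} h_T^{-2} \Vert \vec{w} \Vert_{0, T}^2$ by $\Vert \vec{w} \Vert_1^2$. A scaled trace inequality then reduces the edge sum $\sum_{E} h_E^{-1} \Vert \vec{w} \Vert_{0, E}^2$ to element volume and gradient terms of the same type, with the relations \eqref{cutoff2} and \eqref{cutoff5} linking $\Vert \sigma \Vert_{0, E}$ to the bubble norms on $\omega_E$.

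Assembling these pieces yields $\langle \mathcal{P}(\director_h), \vec{w} \rangle \leq C \big( \sum_{T \in \triangulation} \Theta_T^2 \big)^{1/2} \Vert \vec{w} \Vert_1$, and taking the supremum over the unit sphere of $\tilde{Y}_h$ delivers the claim. The genuine work lies in the careful bookkeeping of the bubble estimates, namely tracking which face each bubble vanishes on, keeping the mesh weights consistent through the quasi-uniformity of $\{\triangulation\}$, and establishing the uniform stability of the decomposition; the analytic ingredients themselves are all furnished by Lemmas \ref{clementlemma} and \ref{cutoffinequalities} and Corollary \ref{cutoffexpansion}.
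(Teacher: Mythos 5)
Your proposal is correct and takes essentially the same route as the paper's proof: both write $\Vert \tilde{\mathcal{P}}_h(\director_h) \Vert_{\tilde{Y}_h^*}$ as a supremum, insert the elementwise residual/jump representation, apply Cauchy--Schwarz on elements, edges, and then over the sums, and close the argument with the bounded-overlap bound and the key scaling step that is valid only because $\tilde{Y}_h$ is built from the cut-off (bubble) functions. The one organizational difference is that the paper never needs your weighted norm $W(\vec{w})$ or an $H^1$-stable bubble decomposition: it applies the local bounds $\Vert \vec{v}_h \Vert_{0,T} \leq C_1 h_T \Vert \vec{v}_h \Vert_{1,T}$ and $\Vert \vec{v}_h \Vert_{0,E} \leq C_2 h_E^{1/2} \Vert \vec{v}_h \Vert_{1,\omega_E}$ directly to the whole test function $\vec{v}_h \in \tilde{Y}_h$, which hold by norm equivalence on the reference configuration since the local restricted space is a fixed finite-dimensional space containing no nonzero constants (every bubble vanishes at the element vertices), so the decomposition-stability step you flag as the main obstacle can be bypassed entirely.
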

\begin{proof}
Observe that
\begin{widetext}
\vspace{-1.5em}
\begin{align*}
\Vert \tilde{\mathcal{P}}_h(\director_h) \Vert_{\tilde{Y}_h^*}  &= \sup_{\substack{\vec{v_h} \in \tilde{Y}_h \\ \Vert \vec{v}_h \Vert_Y = 1}} \sum_{T \in \triangulation} \Bigg \{ \int_T \bigg( -K_1 \nabla(\diverg \director_h) + K_3 \curl (\vec{Z}(\director_h) \curl \director_h) \nonumber \\
& \qquad + (K_2 - K_3) (\director_h \cdot \curl \director_h) \curl \director_h + 2 K_2 t_0 \curl \director_h + 2 \zeta (\director_h \cdot \director_h - 1) \director_h \bigg) \cdot \vec{v}_h \diff{V} \Bigg \} \nonumber \\
& \qquad + \sum_{E \in \mathcal{E}_{h, \Omega}} \int_E [K_1 (\diverg \director_h) \eta_E + K_3 (\vec{Z}(\director_h) \curl \director_h) \times \eta_E]_E \cdot \vec{v}_h \diff{S} \nonumber \\
& \leq \sup_{\substack{\vec{v}_h \in \tilde{Y}_h \\ \Vert \vec{v}_h \Vert_Y = 1}} \sum_{T \in \triangulation} \Bigg \{ \big \Vert -K_1 \nabla(\diverg \director_h) + K_3 \curl (\vec{Z}(\director_h) \curl \director_h) \nonumber \\
& \qquad + (K_2 - K_3) (\director_h \cdot \curl \director_h) \curl \director_h + 2 K_2 t_0 \curl \director_h + 2 \zeta (\director_h \cdot \director_h - 1) \director_h \big \Vert_{0, T} \Vert \vec{v}_h \Vert_{0, T} \Bigg \} \nonumber \\
& \qquad + \sum_{E \in \mathcal{E}_{h, \Omega}} \big \Vert [K_1 (\diverg \director_h) \eta_E + K_3 (\vec{Z}(\director_h) \curl \director_h) \times \eta_E]_E \big \Vert_{0, E} \Vert \vec{v}_h \Vert_{0, E}
\end{align*}
applying the Cauchy-Schwarz inequality. Using the definition of $\tilde{Y}_h$, the quasi-uniformity of $\triangulation$, and standard finite-element scaling arguments
\begin{align}
\Vert \tilde{\mathcal{P}}_h(\director_h) \Vert_{\tilde{Y}_h^*} &\leq \sup_{\substack{\vec{v}_h \in \tilde{Y}_h \\ \Vert \vec{v}_h \Vert_Y = 1}} \sum_{T \in \triangulation} \Bigg \{ C_1 h_T \big \Vert -K_1 \nabla(\diverg \director_h) + K_3 \curl (\vec{Z}(\director_h) \curl \director_h) \nonumber \\
&  \qquad + (K_2 - K_3) (\director_h \cdot \curl \director_h) \curl \director_h + 2 K_2 t_0 \curl \director_h + 2 \zeta (\director_h \cdot \director_h - 1) \director_h \big \Vert_{0, T} \Vert \vec{v}_h \Vert_{1, T}  \Bigg \} \nonumber \\
& \qquad + \sum_{E \in \mathcal{E}_{h, \Omega}} C_2 h_E^{1/2} \big \Vert [K_1 (\diverg \director_h) \eta_E + K_3 (\vec{Z}(\director_h) \curl \director_h) \times \eta_E]_E \big \Vert_{0, E} \Vert \vec{v}_h \Vert_{1, \omega_E} \nonumber \\
& \leq \sup_{\substack{\vec{v}_h \in \tilde{Y}_h \\ \Vert \vec{v}_h \Vert_Y = 1}} \max(C_1, C_2) \Bigg( \sum_{T \in \triangulation} h_T^2 \big \Vert -K_1 \nabla(\diverg \director_h) + K_3 \curl (\vec{Z}(\director_h) \curl \director_h) \nonumber \\
& \qquad + (K_2 - K_3) (\director_h \cdot \curl \director_h) \curl \director_h + 2 K_2 t_0 \curl \director_h + 2 \zeta (\director_h \cdot \director_h - 1) \director_h \big \Vert_{0, T}^2 \nonumber \\
& \qquad + \sum_{E \in \mathcal{E}_{h, \Omega}} h_E \big \Vert [K_1 (\diverg \director_h) \eta_E + K_3 (\vec{Z}(\director_h) \curl \director_h) \times \eta_E]_E \big \Vert_{0, E}^2 \Bigg)^{1/2} \nonumber \\
& \qquad \cdot \Bigg( \sum_{T \in \triangulation} \Vert \vec{v}_h \Vert^2_{1, T} + \sum_{E \in \mathcal{E}_{h, \Omega}} \Vert \vec{v}_h \Vert^2_{1, \omega_E} \Bigg)^{1/2}, \label{partialFtildeBound}
\end{align}
where the second inequality comes after applying the Cauchy-Schwarz inequality for sums. As in the previous proof, there exists a $C_* > 0$, independent of $h$ and taking into account repeated elements in the sum, such that
\begin{align}
\bigg( \sum_{T \in \triangulation} \Vert \vec{v}_h \Vert^2_{1, T} + \sum_{E \in \mathcal{E}_{h, \Omega}} \Vert \vec{v}_h \Vert^2_{1, \omega_E} \bigg)^{1/2} \leq C_* \Vert \vec{v}_h \Vert_Y. \label{sumTriagleBound}
\end{align}
\end{widetext}
Combining \eqref{partialFtildeBound} and \eqref{sumTriagleBound} implies that
\begin{align*}
\Vert \tilde{\mathcal{P}}_h(\director_h) \Vert_{\tilde{Y}_h^*} \leq C \left ( \sum_{T \in \triangulation} \Theta_T^2 \right)^{1/2},
\end{align*}
where $C$ depends only on the dimension of the problem, choice of reference elements, and the smallest angle of the triangulation.
\end{proof}

The final inequality targeted is showing that there exists a $C > 0$, independent of $h$, such that
\begin{align*}
\Vert (\text{Id}_Y - R_h)^* \tilde{\mathcal{P}}_h (\director_h) \Vert_{Y^*} \leq C \Vert \tilde{\mathcal{P}}_h(\director_h) \Vert_{\tilde{Y}_h^*}.
\end{align*}
Given the result in Lemma \ref{penaltyupperBoundRestriction}, it suffices to prove the following lemma.
\begin{lemma} \label{penaltylowerBoundTildeF}
There exists a $C > 0$, independent of $h$, such that
\begin{align*}
C \left( \sum_{T \in \triangulation} \Theta_T^2 \right)^{1/2} \leq \Vert \tilde{\mathcal{P}}_h(\director_h) \Vert_{\tilde{Y}_h^*}.
\end{align*}
\end{lemma}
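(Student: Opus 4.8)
The plan is to prove this efficiency bound in the standard bubble-function manner: for a fixed discrete solution $\director_h$, construct explicit test functions in $\tilde{Y}_h$ against which $\tilde{\mathcal{P}}_h(\director_h)$ pairs so as to recover the element and edge contributions defining $\Theta_T$. The enabling observation is a degree count. Since $\director_h\vert_T$ is a polynomial of degree at most $s$, the volume residual $\vec{r}_T$ appearing inside $\Theta_T$ (the term $-K_1\nabla(\diverg\director_h)+K_3\curl(\vec{Z}(\director_h)\curl\director_h)+(K_2-K_3)(\director_h\cdot\curl\director_h)\curl\director_h+2K_2t_0\curl\director_h+2\zeta(\director_h\cdot\director_h-1)\director_h$) is, on each $T$, a polynomial of degree at most $3s$, and the jump residual $\vec{r}_E$ is a polynomial of degree at most $3s-1$ on each $E$. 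Because $k\geq 3s$ in the definition of $\tilde{Y}_h$, both residuals lie in the generating spaces $[\Pi_{k\vert T}]^3$ and $[\Pi_{k\vert E}]^3$, so the cut-off inequalities of Lemma \ref{cutoffinequalities} and Corollary \ref{cutoffexpansion} apply with $V_T=[\Pi_{k\vert T}]^3$ and $V_E=[\Pi_{k\vert E}]^3$.

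First I would treat the volume residual. Taking the element-bubble function $\psi_T\vec{r}_T\in\tilde{Y}_h$, which is supported in $T$ and vanishes on $\partial T$, the pairing $\langle\tilde{\mathcal{P}}_h(\director_h),\psi_T\vec{r}_T\rangle$ annihilates every edge/jump contribution and reduces to $\int_T\psi_T\lvert\vec{r}_T\rvert^2\diff{V}$. The lower bound in \eqref{cutoff1} (equivalently, selecting the polynomial realizing its inf–sup) bounds this below by a multiple of $\Vert\vec{r}_T\Vert_{0,T}^2$, while \eqref{expansion1} controls the denominator via $\Vert\psi_T\vec{r}_T\Vert_{1,T}\leq\bar{C}_4h_T^{-1}\Vert\vec{r}_T\Vert_{0,T}$. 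Dividing yields control of $h_T\Vert\vec{r}_T\Vert_{0,T}$ by the local pairing.

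Next I would treat the jump residual using $\psi_E P\vec{r}_E$, supported in $\omega_E$. Here the pairing recovers the desired edge term $\int_E\psi_E\lvert\vec{r}_E\rvert^2\diff{S}$ but also volume coupling terms $\int_T\vec{r}_T\cdot\psi_E P\vec{r}_E\diff{V}$ over the two elements of $\omega_E$ (all other jumps vanish since $\psi_E=0$ off $E$). I would isolate the edge term with the lower bound \eqref{cutoff2}, estimate the coupling by Cauchy–Schwarz together with \eqref{cutoff5} (giving $\Vert\psi_E P\vec{r}_E\Vert_{0,T}\leq C_7h_T^{1/2}\Vert\vec{r}_E\Vert_{0,E}$), and then substitute the interior bound on $\Vert\vec{r}_T\Vert_{0,T}$ obtained in the previous step. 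The denominator is handled by \eqref{expansion2} and \eqref{cutoff5}, giving $\Vert\psi_E P\vec{r}_E\Vert_{1,\omega_E}\lesssim h_E^{-1/2}\Vert\vec{r}_E\Vert_{0,E}$, which produces control of $h_E^{1/2}\Vert\vec{r}_E\Vert_{0,E}$.

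Finally, to convert these local estimates into the single global dual-norm lower bound, I would assemble one test function $\vec{w}=\sum_{T\in\triangulation}h_T^2\psi_T\vec{r}_T+\delta\sum_{E\in\mathcal{E}_{h,\Omega}}h_E\psi_E P\vec{r}_E\in\tilde{Y}_h$ with a fixed small weight $\delta>0$. Evaluating $\langle\tilde{\mathcal{P}}_h(\director_h),\vec{w}\rangle$ reproduces $\sum_Th_T^2\Vert\vec{r}_T\Vert_{0,T}^2$ and $\sum_Eh_E\Vert\vec{r}_E\Vert_{0,E}^2$ up to the bubble-function constants, together with the interior–edge cross terms; the bound $\Vert\vec{w}\Vert_Y\leq C(\sum_T\Theta_T^2)^{1/2}$ follows because the element bubbles have disjoint supports and the edge bubbles have boundedly overlapping supports, after which the claim follows from $\Vert\tilde{\mathcal{P}}_h(\director_h)\Vert_{\tilde{Y}_h^*}\geq\langle\tilde{\mathcal{P}}_h(\director_h),\vec{w}\rangle/\Vert\vec{w}\Vert_Y$. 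The main obstacle is precisely the cross-term absorption: the coupling contributions must be dominated by the positive volume- and edge-residual terms with constants independent of $h$. This requires a weighted Young's inequality whose parameters are chosen in the correct order — first the Young parameter large enough to preserve the edge contribution, then $\delta$ small enough to preserve the element contribution — so that both good terms survive and $\langle\tilde{\mathcal{P}}_h(\director_h),\vec{w}\rangle\geq c\sum_T\Theta_T^2$ with $c>0$ independent of $h$.
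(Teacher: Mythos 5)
Your proposal is correct and rests on the same core machinery as the paper's proof --- element bubbles $\psi_T\vec{r}_T$ for the volume residual and edge bubbles $\psi_E P\vec{r}_E$ for the jump residual, controlled by \eqref{cutoff1}--\eqref{cutoff5} and Corollary \ref{cutoffexpansion}, with the volume bound fed back into the edge estimate --- but it departs genuinely from the paper in the final localization-to-global step. The paper never builds a single test function: it bounds each residual by a supremum of $\langle \tilde{\mathcal{P}}_h(\director_h),\vec{v}_h\rangle$ over the locally supported subspaces $\tilde{Y}_{h\vert T}$ and $\tilde{Y}_{h\vert\omega_E}$ (inequalities \eqref{penaltysecondcomponent4} and \eqref{penaltythirdcomponent5}), enlarges both to $\tilde{Y}_{h\vert\omega_T}$ to get $\bar{C}\Theta_T \leq \sup_{\vec{v}_h\in\tilde{Y}_{h\vert\omega_T},\,\Vert\vec{v}_h\Vert_Y=1}\langle\tilde{\mathcal{P}}_h(\director_h),\vec{v}_h\rangle$, and then sums over $T$ using \eqref{sqrtTriangle}. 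You instead assemble the weighted global test function $\vec{w}=\sum_T h_T^2\psi_T\vec{r}_T+\delta\sum_E h_E\psi_E P\vec{r}_E\in\tilde{Y}_h$, absorb the element--edge cross terms by a weighted Young inequality (with the parameters chosen in the order you describe), bound $\Vert\vec{w}\Vert_1$ by $C(\sum_T\Theta_T^2)^{1/2}$ via finite overlap, and conclude from $\Vert\tilde{\mathcal{P}}_h(\director_h)\Vert_{\tilde{Y}_h^*}\geq\langle\tilde{\mathcal{P}}_h(\director_h),\vec{w}\rangle/\Vert\vec{w}\Vert_Y$. Your route buys an explicit, self-contained account of why the constant is independent of $h$ and of how the overlapping edge supports are handled --- a point the paper's summation step treats rather tersely --- while the paper's route avoids cross-term bookkeeping entirely by keeping everything inside local dual norms. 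Your explicit degree count (volume residual of degree $\leq 3s$, jump residual of degree $\leq 3s-1$) also makes transparent why the paper requires $k\geq 3s$ in the definition of $\tilde{Y}_h$, which the paper leaves implicit.

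One technical caveat: your argument needs the ``diagonal'' bubble bounds $\int_T\psi_T\lvert\vec{r}_T\rvert^2\diff{V}\geq c\,\Vert\vec{r}_T\Vert_{0,T}^2$ and $\int_E\psi_E\lvert\vec{r}_E\rvert^2\diff{S}\geq c\,\Vert\vec{r}_E\Vert_{0,E}^2$, which do not follow from the inf--sup form of \eqref{cutoff1}--\eqref{cutoff2} by ``selecting the polynomial realizing the supremum'' --- the supremum need not be attained at $v=\vec{r}_T$. These diagonal bounds are nonetheless true and standard (equivalence of the norms $u\mapsto(\int\psi\,u^2)^{1/2}$ and $\Vert u\Vert_0$ on the finite-dimensional reference spaces, transferred by affine scaling), so you should invoke that norm equivalence directly rather than the stated inf--sup inequality; the paper's proof sidesteps this by working with the supremum form throughout.
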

\begin{proof}
Consider an arbitrary element $T \in \triangulation$ and edge $E \in \mathcal{E}(T) \cap \mathcal{E}_{h, \Omega}$ and define the space $\tilde{Y}_{h \vert \omega}$, for $\omega \in \{ T, \omega_E, \omega_T \}$, as the set of all functions $\phi \in \tilde{Y}_h$ with $\text{supp}(\phi) \subset \omega$. Note that in this proof the numbered constants correspond to those of Lemmas \ref{cutoffinequalities} and \ref{cutoffexpansion}. First,
\begin{widetext}
\begin{align}
&C_1 \bar{C}_4^{-1} h_T \big \Vert -K_1 \nabla (\diverg \director_h) + K_3 \curl (\vec{Z}(\director_h) \curl \director_h) + (K_2 - K_3) (\director_h \cdot \curl \director_h) \curl \director_h \nonumber \\
& \hspace{5em} + 2 K_2 t_0 \curl \director_h + 2 \zeta (\director_h \cdot \director_h - 1) \director_h \big \Vert_{0, T} \nonumber \\
& \leq \sup_{\vec{w} \in [\Pi_{k \vert T}]^3 \backslash \{\vec{0}\}} \bar{C}_4^{-1} h_T \Vert \psi_T \vec{w} \Vert^{-1}_{0, T} \int_T \Big( -K_1 \nabla(\diverg \director_h) + K_3 \curl (\vec{Z}(\director_h) \curl \director_h) \nonumber \\
& \hspace{5em} + (K_2 - K_3) (\director_h \cdot \curl \director_h) \curl \director_h + 2 K_2 t_0 \curl \director_h + 2 \zeta (\director_h \cdot \director_h - 1) \director_h \Big) \cdot  \psi_T \vec{w} \diff{V} \label{penaltysecondcomponent1} \\
& \leq \sup_{\vec{w} \in [\Pi_{k \vert T}]^3 \backslash \{\vec{0}\}} \Vert \psi_T \vec{w} \Vert^{-1}_{1, T} \int_T \Big( -K_1 \nabla(\diverg \director_h) + K_3 \curl (\vec{Z}(\director_h) \curl \director_h) \nonumber \\
& \hspace{5em} + (K_2 - K_3) (\director_h \cdot \curl \director_h) \curl \director_h + 2 K_2 t_0 \curl \director_h  + 2 \zeta (\director_h \cdot \director_h - 1) \director_h \Big) \cdot \psi_T \vec{w} \diff{V} \label{penaltysecondcomponent2}\\
& \leq \sup_{\vec{w} \in [\Pi_{k \vert T}]^3 \backslash \{\vec{0}\}} \Vert \psi_T \vec{w} \Vert^{-1}_{1, T} \langle \tilde{\mathcal{P}}_h(\director_h), \psi_T \vec{w} \rangle \nonumber \\
& \leq \sup_{\substack{\vec{v}_h \in \tilde{Y}_{h \vert T} \\ \Vert \vec{v}_h \Vert_Y = 1}} \langle \tilde{\mathcal{P}}_h(\director_h), \vec{v} \rangle. \label{penaltysecondcomponent4}
\end{align}
\end{widetext}
Inequality \eqref{penaltysecondcomponent1} is given by \eqref{cutoff1} from Lemma \ref{cutoffinequalities}, and \eqref{penaltysecondcomponent2} is a consequence of \eqref{expansion1} from Lemma \ref{cutoffexpansion}. The inequality in \eqref{penaltysecondcomponent4} comes from expanding the space over which the supremum is taken and noting that $\psi_T \vec{w}$ vanishes at the boundary of $T$.

Recall that, with proper consideration, the constant $h_T$ in Lemma \ref{cutoffinequalities} is exchangeable for $h_E$. Now we bound
\begin{widetext}
\vspace{-1em}
\begin{align}
&C_2 \bar{C}_6^{-1} C_7^{-1} h_E^{1/2} \big \Vert [K_1 (\diverg \director_h) \eta_E + K_3 (\vec{Z}(\director_h) \curl \director_h) \times \eta_E]_E \big \Vert_{0, E} \nonumber \\
& \leq \sup_{\sigma \in [\Pi_{k \vert E}]^3 \backslash \{\vec{0} \}} \frac{\bar{C}_6^{-1} h_E}{C_7 h_E^{1/2} \Vert P \sigma \Vert_{0, E}} \int_E [K_1(\diverg \director_h) \eta_E + K_3(\vec{Z}(\director_h) \curl \director_h) \times \eta_E]_E \cdot \psi_E P \sigma \diff{S} \label{penaltythirdcomponent1} \\
& = \sup_{\sigma \in [\Pi_{k \vert E}]^3 \backslash \{\vec{0} \}} \frac{\bar{C}_6^{-1} h_E}{C_7 h_E^{1/2} \Vert \sigma \Vert_{0, E}} \bigg \{ \langle \tilde{\mathcal{P}}_h(\director_h), \psi_E P \sigma \rangle - \int_{\omega_E} \Big (-K_1 \nabla (\diverg \director_h) + K_3 \curl (\vec{Z}(\director_h) \curl \director_h) \nonumber \\
& \hspace{5em} + (K_2 - K_3) (\director_h \cdot \curl \director_h) \curl \director_h + 2 K_2 t_0 \curl \director_h + 2 \zeta (\director_h \cdot \director_h - 1) \director_h \Big)  \cdot \psi_E P \sigma \diff{V} \bigg \}, \label{penaltythirdcomponent2}
\end{align}
\end{widetext}
where \eqref{penaltythirdcomponent1} is given by \eqref{cutoff2} of Lemma \ref{cutoffinequalities} and \eqref{penaltythirdcomponent2} comes from the fact that $\psi_E P \sigma$ is supported on $\omega_E$ and that the norm in the denominator is over an edge and the continuation operator, $P$, does not modify the values of $\sigma$ there. Next, we apply \eqref{cutoff5} of Lemma \ref{cutoffinequalities} for each of the elements in $\omega_E$ and distribute the fraction quantity. Using \eqref{expansion2} from Lemma \ref{cutoffexpansion} for each of the elements in $\omega_E$ for the first summand and the Cauchy-Schwarz inequality along with cancellation of the resulting $\Vert \psi_E P \sigma \Vert_{0, \omega_E}$ terms in the second summand yields,
\begin{widetext}
\vspace{-1em}
\begin{align*}
&C_2 \bar{C}_6^{-1} C_7^{-1} h_E^{1/2} \big \Vert [K_1 (\diverg \director_h) \eta_E + K_3 (\vec{Z}(\director_h) \curl \director_h) \times \eta_E]_E \big \Vert_{0, E} \\
& \hspace{2em} \leq \sup_{\sigma \in [\Pi_{k \vert E}]^3 \backslash \{\vec{0} \}} \frac{\bar{C}_6^{-1} h_E}{\Vert \psi_E P \sigma \Vert_{0, \omega_E}} \bigg \{ \langle \tilde{\mathcal{P}}_h(\director_h), \psi_E P \sigma \rangle - \int_{\omega_E} \Big (-K_1 \nabla (\diverg \director_h) + K_3 \curl (\vec{Z}(\director_h) \curl \director_h) \nonumber \\
& \hspace{6em} + (K_2 - K_3) (\director_h \cdot \curl \director_h) \curl \director_h + 2 K_2 t_0 \curl \director_h + 2 \zeta (\director_h \cdot \director_h - 1)\director_h \Big) \cdot \psi_E P \sigma \diff{V} \bigg \}  \\
& \hspace{2em} \leq \sup_{\sigma \in [\Pi_{k \vert E}]^3 \backslash \{\vec{0} \}} \Vert \psi_E P \sigma \Vert_{1, \omega_E}^{-1} \langle \tilde{\mathcal{P}}_h(\director_h), \psi_E P \sigma \rangle + \bar{C}_6^{-1} h_E \big \Vert -K_1 \nabla (\diverg \director_h) + K_3 \curl (\vec{Z}(\director_h) \curl \director_h) \nonumber \\
& \hspace{6em} + (K_2 - K_3) (\director_h \cdot \curl \director_h) \curl \director_h + 2 K_2 t_0 \curl \director_h + 2 \zeta (\director_h \cdot \director_h - 1) \director_h \big \Vert_{0, \omega_E}.
\end{align*}
This then implies that 
\begin{align}
&C_2 \bar{C}_6^{-1} C_7^{-1} h_E^{1/2} \big \Vert [K_1 (\diverg \director_h) \eta_E + K_3 (\vec{Z}(\director_h) \curl \director_h) \times \eta_E]_E \big \Vert_{0, E} \nonumber \\
& \hspace{13em} \leq \sup_{\substack{\vec{v}_h \in \tilde{Y}_{h \vert {\omega_E}} \\ \Vert \vec{v}_h \Vert_Y = 1}} \langle \tilde{\mathcal{P}}_h(\director_h), \vec{v}_h \rangle + C_d \sup_{\substack{\vec{v}_h \in \tilde{Y}_{h \vert {\omega_E}} \\ \Vert \vec{v}_h \Vert_Y = 1}} \langle \tilde{\mathcal{P}}_h(\director_h), \vec{v}_h \rangle. \label{penaltythirdcomponent5}
\end{align}
\end{widetext}
The first part of the inequality is given by expanding the space over which the supremum is taken. The second component of \eqref{penaltythirdcomponent5} uses the inequality in \eqref{penaltysecondcomponent4}, where $C_d$ relates the constants $\bar{C}_6^{-1} h_E$ to $C_1 \bar{C}_4^{-1} h_T$. 

Observe that the bounds in \eqref{penaltysecondcomponent4} and \eqref{penaltythirdcomponent5} only get larger when considering a supremum over $\tilde{Y}_{h \vert {\omega_T}}$. Hence, gathering the bounds in \eqref{penaltysecondcomponent4} and \eqref{penaltythirdcomponent5} yields
\begin{align*}
\bar{C} \Theta_T \leq \sup_{\substack{\vec{v}_h \in \tilde{Y}_{h \vert {\omega_T}} \\ \Vert \vec{v}_h \Vert_Y = 1}} \langle \tilde{\mathcal{P}}_h(\director_h), \vec{v}_h \rangle
\end{align*}
for $\bar{C}$ independent of $h$. Finally, recall that if $a_i \geq 0$,
\begin{align}
\left(\sum_i a_i \right)^{1/2} \leq \sum_i a_i^{1/2}. \label{sqrtTriangle}
\end{align} 
Summing over $T \in \triangulation$ and applying \eqref{sqrtTriangle} yields
\begin{align*}
C \left( \sum_{T \in \triangulation} \Theta_T^2 \right)^{1/2} \leq \Vert \tilde{\mathcal{P}}_h(\director_h) \Vert_{\tilde{Y}_h^*}.
\end{align*}
\end{proof}

With the preceding lemmas established, we now state and prove the main result of this section. 
\begin{theorem} \label{penaltyaposteriorierrorestimator}
Say that $\director_*$ is a solution to Equation \eqref{penaltyFOOC} satisfying the assumptions of Proposition \ref{nonlinearErrorEstimation}. Furthermore, let $\director_h$ be a solution to the discrete problem, as in Equation \eqref{penaltyDiscreteFOOC}, such that $\Vert \mathcal{P}_h(\director_h) \Vert_{Y_h^*} = 0$ and $\director_h \in B(\director_*, R)$. Then there exists a $C>0$, independent of $h$, such that
\begin{align*}
\Vert \director_* - \director_h \Vert_1 \leq C \left( \sum_{T \in \triangulation} \Theta_T^2 \right)^{1/2}.
\end{align*}
\end{theorem}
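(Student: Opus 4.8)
The plan is to chain the abstract error estimate of Proposition~\ref{nonlinearErrorEstimation} with the auxiliary-space decomposition of Proposition~\ref{auxiliarySpaceInequality}, using the three preceding lemmas to identify the resulting right-hand side with the computable estimator $\left(\sum_{T \in \triangulation} \Theta_T^2\right)^{1/2}$. Since $\director_h \in B(\director_*, R)$ by hypothesis and $\director_*$ is a regular solution, Proposition~\ref{nonlinearErrorEstimation} immediately supplies the upper bound
\begin{align*}
\Vert \director_* - \director_h \Vert_1 \leq 2 \Vert D\mathcal{P}(\director_*)^{-1} \Vert_{\mathcal{L}(Y^*, X)} \Vert \mathcal{P}(\director_h) \Vert_{Y^*},
\end{align*}
where I use that the norm on $X = \Honeb{\Omega}^3$ is precisely $\Vert \cdot \Vert_1$. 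It therefore remains only to control $\Vert \mathcal{P}(\director_h) \Vert_{Y^*}$ by the estimator, which is exactly what the auxiliary-space proposition is designed to do.

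First I would verify the hypothesis of Proposition~\ref{auxiliarySpaceInequality}, namely that $\Vert (\text{Id}_Y - R_h)^* \tilde{\mathcal{P}}_h(\director_h) \Vert_{Y^*} \leq C_0 \Vert \tilde{\mathcal{P}}_h(\director_h) \Vert_{\tilde{Y}_h^*}$ for some $C_0$ independent of $h$. This follows by composing Lemma~\ref{penaltyupperBoundRestriction}, which bounds the left-hand side above by $C\left(\sum_T \Theta_T^2\right)^{1/2}$, with Lemma~\ref{penaltylowerBoundTildeF}, which bounds that same quantity below by $\Vert \tilde{\mathcal{P}}_h(\director_h) \Vert_{\tilde{Y}_h^*}$. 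With the hypothesis confirmed, I would apply Proposition~\ref{auxiliarySpaceInequality} and discard three of its four terms: the second vanishes by \eqref{penaltyzero1}, the third by \eqref{penaltyzero2}, and the fourth by the theorem's standing assumption that $\Vert \mathcal{P}_h(\director_h) \Vert_{Y_h^*} = 0$. What remains is $\Vert \mathcal{P}(\director_h) \Vert_{Y^*} \leq C_0 \Vert \tilde{\mathcal{P}}_h(\director_h) \Vert_{\tilde{Y}_h^*}$.

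The final step is to invoke Lemma~\ref{penaltyupperBoundTildeY} to obtain $\Vert \tilde{\mathcal{P}}_h(\director_h) \Vert_{\tilde{Y}_h^*} \leq C\left(\sum_T \Theta_T^2\right)^{1/2}$, and then concatenate the three inequalities, absorbing the $h$-independent factors $2\Vert D\mathcal{P}(\director_*)^{-1}\Vert_{\mathcal{L}(Y^*,X)}$, $C_0$, and the lemma constants into a single $C$. Because all of the genuinely technical estimation has already been executed in the lemmas, the proof itself is a short bookkeeping argument rather than a computation; the one point warranting care is confirming that each term I wish to drop truly vanishes, which relies on the special structure of this problem — no forcing term, no Neumann data, and boundary data captured exactly by $V_h$ — that forces $\tilde{\mathcal{P}}_h$ and $\mathcal{P}_h$ to coincide with $\mathcal{P}$ on their respective domains. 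The sole place where $\director_h \in B(\director_*, R)$ is indispensable is in licensing Proposition~\ref{nonlinearErrorEstimation}, so I would flag that dependency explicitly rather than presenting the abstract estimate as unconditional.
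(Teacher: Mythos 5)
Your proposal is correct and follows essentially the same route as the paper's own proof: verifying the hypothesis of Proposition~\ref{auxiliarySpaceInequality} via Lemmas~\ref{penaltyupperBoundRestriction} and~\ref{penaltylowerBoundTildeF}, discarding the vanishing terms using \eqref{penaltyzero1}, \eqref{penaltyzero2}, and the assumption $\Vert \mathcal{P}_h(\director_h) \Vert_{Y_h^*} = 0$, bounding the surviving term by the estimator via Lemma~\ref{penaltyupperBoundTildeY}, and closing with the upper bound of Proposition~\ref{nonlinearErrorEstimation}. The only difference is cosmetic ordering (you invoke Proposition~\ref{nonlinearErrorEstimation} first rather than last), and your explicit flagging of where $\director_h \in B(\director_*, R)$ is used is a sound, if minor, clarification.
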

\begin{proof}
Combining the results of Lemmas \ref{penaltyupperBoundRestriction} and \ref{penaltylowerBoundTildeF} implies that
\begin{align*}
\Vert (\text{Id}_Y - R_h)^* \tilde{\mathcal{P}}_h(\director_h) \Vert_{Y^*} &\leq \tilde{C}_1 \left( \sum_{T \in \triangulation} \Theta_T^2 \right)^{1/2} \nonumber \\
& \leq \tilde{C}_2 \Vert \tilde{\mathcal{P}}_h(\director_h) \Vert_{\tilde{Y}_h^*},
\end{align*}
for $\tilde{C}_2 > 0$, independent of $h$. Thus, the conditions of Proposition \ref{auxiliarySpaceInequality} are fulfilled. This implies, noting the quantities in Equations \eqref{penaltyzero1} and \eqref{penaltyzero2}, that
\begin{align*}
\Vert \mathcal{P}(\director_h) \Vert_{Y^*} \leq C_0 \Vert \tilde{\mathcal{P}}_h(\director_h) \Vert_{\tilde{Y}_h^*} \leq C_1 \left( \sum_{T \in \triangulation} \Theta_T^2 \right)^{1/2},
\end{align*}
where the second inequality is a consequence of Lemma \ref{penaltyupperBoundTildeY}. Finally, using the upper bound from Proposition \ref{nonlinearErrorEstimation} implies
\begin{align*}
&\Vert \director_* - \director_h \Vert_1 \leq 2 \Vert D\mathcal{P}(\director_*)^{-1} \Vert_{\mathcal{L}(Y^*, X)} \Vert \mathcal{P}(\director_h) \Vert_{Y^*}  \nonumber \\
& \hspace{4em} \leq 2 C_1 \Vert D\mathcal{P}(\director_*)^{-1} \Vert_{\mathcal{L}(Y^*, X)} \left( \sum_{T \in \triangulation} \Theta_T^2 \right)^{1/2}.
\end{align*}
Setting $C = 2 C_1 \Vert D\mathcal{P}(\director_*)^{-1} \Vert_{\mathcal{L}(Y^*, X)}$ yields the desired inequality.
\end{proof}

As discussed in \cite[Remark 2.5]{Verfurth1}, the results of Lemma \ref{cutoffinequalities} are equally applicable to quadrilateral or simplicial elements. Thus, the theory above holds for mesh families of quadrilateral finite elements satisfying equivalent conditions, as used in the numerical experiments below.

Considering the Lagrange multiplier variational form in Equation \eqref{FOOC} and following an analogous process to the penalty case, a related element-wise estimator is derived. For a $T \in \triangulation$,
\begin{widetext}
\vspace{-1.5em}
\begin{align}
\Theta_T &= \Bigg \{ h_T^2 \big \Vert -K_1 \nabla (\diverg \director_h) + K_3 \curl (\vec{Z}(\director_h) \curl \director_h) + (K_2 - K_3) (\director_h \cdot \curl \director_h) \curl \director_h + 2K_2 t_0 (\curl \director_h) \nonumber \\
& + \lambda_h \director_h \big \Vert_{0, T}^2  + \Vert \director_h \cdot \director_h - 1 \Vert_{0, T}^2 + \sum_{E \in \mathcal{E}(T) \cap \mathcal{E}_{h, \Omega}} h_E \big \Vert [K_1 (\diverg \director_h) \eta_E + K_3 (\vec{Z}(\director_h) \curl \director_h) \times \eta_E]_E \big \Vert_{0, E}^2 \Bigg \}^{1/2}. \label{lagrangeEstimator}
\end{align}
\end{widetext}
While theory establishing invertibility of the discretized derivative of the variational form in \eqref{FOOC} has been demonstrated in \cite{Emerson1}, solution pairs $(\director_*, \lambda_*)$ satisfying the constraint are not localized in the sense that $\lambda_*$ may be freely perturbed while the pair remains a solution. Therefore, though a number of the lemmas above can be extended to the estimator proposed in \eqref{lagrangeEstimator}, special theoretical treatment is required to properly apply the propositions of Section \ref{preliminarytheory} and is the subject of future work. Nevertheless, the experiments in Section \ref{numerics} report and discuss the performance of the estimator numerically.

%Considering the Lagrange multiplier variational form in Equations \ref{FOOC}, integrating by parts elementwise for each $T \in \triangulation$ yields
%\begin{align*}
%& \langle F(\director_h, \lambda_h), [\vec{v}, q] \rangle = \nonumber \\
%&   -K_1 \sum_{T \in \triangulation} \int_T \nabla (\diverg \director_h) \cdot \vec{v} \diff{V} + K_1 \sum_{E \in \mathcal{E}_{h, \Omega}} \int_E [(\diverg \director_h) \eta_E]_E \cdot \vec{v} \diff{S} \nonumber \\
%&  + K_3 \sum_{T \in \triangulation} \int_T \curl (\vec{Z}(\director_h) \curl \director_h) \cdot \vec{v} \diff{V} \nonumber \\
%&  + K_3 \sum_{E \in \mathcal{E}_{h, \Omega}} \int_E [(\vec{Z}(\director_h) \curl \director_h) \times \eta_E]_E \cdot \vec{v} \diff{S} \nonumber \\
%&  + (K_2 - K_3) \sum_{T \in \triangulation} \int_T ((\director_h \cdot \curl \director_h) \curl \director_h) \cdot \vec{v} \diff{V} + 2 K_2 t_0 \sum_{T \in \triangulation} \int_T (\curl \director_h) \cdot \vec{v} \diff{V} \nonumber \\
%&  + \sum_{T \in \triangulation} \int_T \lambda_h (\director_h \cdot \vec{v}) \diff{V} + \sum_{T \in \triangulation} q (\director_h \cdot \director_h -1) \diff{V}.
%\end{align*}

\section{Numerical Results} \label{numerics}

In this section, the results of a number of simulations applying the a posteriori error estimators derived above are discussed. In general, the algorithm to compute equilibrium solutions to the nonlinear variational problems presented in Section \ref{Model} has three stages; see Algorithm \ref{algo}. The outermost phase is nested iteration (NI) \cite{Starke1}, which begins on a specified coarsest grid. Newton iterations are performed on each grid, updating the solution approximation at each step, as in \cite{Emerson1}. The stopping criterion for the Newton iterations at each level is based on a specified tolerance for the current approximation's conformance to the first-order optimality conditions in the standard Euclidean $l_2$ norm. In the numerical experiments to follow, this tolerance is fixed at $10^{-4}$. The resulting approximation is then interpolated to a finer grid. The composition of the finer mesh is determined via an AMR strategy based on the value of the appropriate error estimator, $\Theta_T$, for each element $T$ of the coarser mesh. Given the coarse approximate solution $\vec{u}_H$, $\Theta_T$ is computed for each $T$ of mesh $H$. In the numerical simulations to follow, the top $40\%$ of elements are then refined based on their estimator values. For each simulation, the characteristic length scale discussed above is taken to be one micron, such that $\mu = 10^{-6}$ m. Furthermore, the characteristic Frank constant is taken to be $K = 6.2 \times 10^{-12}$ N, the dimensional value of $K_1$ for $5$CB, a common liquid crystal.

\vspace{.3cm}
\begin{algorithm}[H] \label{algo}
\SetAlgoLined
~\\
0. Initialize $\vec{u}_0$ on coarse grid.
~\\
\While{Refinement limit not reached}
{
	\While{First-order optimality conformance threshold not satisfied}
	{
		1. Set up discrete linearized system for Newton iterations on grid $H$. ~\\
		2. Solve for $\delta \vec{u}_H$. ~\\
		3. Compute $\vec{u}_{k+1}$ as in $\vec{u}_k + \alpha \delta \vec{u}_H$. ~\\
	}
	4. Compute $\Theta_T$ on each element for approximate solution $\vec{u}_H$. ~\\
	5. Adaptively refine the grid. ~\\
	6. Interpolate $\vec{u}_H \to \vec{u}_h$.
}
\caption{Newton's method with NI and AMR}
\end{algorithm}
\vspace{.3cm}

For each Newton iteration an incomplete Newton correction is performed such that for a given iterate $\vec{u}_k$, the next Newton iterate is given by $\vec{u}_{k+1} = \vec{u}_k + \alpha \delta \vec{u}_h$, where $\alpha \leq 1$. As discussed in \cite{Emerson1}, this is to encourage strict adherence to the constraint manifold associated with the unit-length requirement imposed on the director field. At each level of NI, the damping parameter $\alpha$ is increased, to a maximum of $1.0$, as the finer features of the solution become increasingly resolved on finer mesh. The grid management, discretizations, and adaptive refinement computations are implemented with the widely used deal.II finite-element library \cite{BangerthHartmannKanschat2007}. In the simulations below, $Q_2$ elements are used to approximate components associated with $\director$, and $Q_1$ elements are applied for computations involving $\lambda$, where appropriate, on each grid. 

The significant nonlinearity present in the Frank-Oseen energy model has limited known analytical solutions in the presence of Dirichlet boundary conditions, especially for two and three dimensional domains. Therefore, while analytical error rates are not available for the problems considered in this section, a number of other metrics are reported to demonstrate the performance of the AMR strategy.

In order to quantify the efficiency gains with adaptive refinement, the simulations applying AMR are compared with those using uniform meshes at each level. To compare the differences in computational work required across the NI sequences, an approximate work unit (WU) is calculated. Assuming the presence of solvers that scale linearly with the number of non-zeros in the matrix, a WU is defined as the sum of the non-zeros in the discretized Hessian for each Newtons step divided by the number of non-zeros in a discretized Hessian on the finest grid. The total roughly approximates the work required by the full NI hierarchy in terms of assembling and solving a single linearization step on the finest level. To compare the work on the uniformly refined mesh, the WUs there are computed using the total number of non-zeros of the finest mesh resulting from the associated simulation applying AMR. Therefore, the WUs reported in the experiments to follow are approximate measures of the required work in terms of assembling and solving a linearization step on the finest adaptively refined grid when optimally scaling solvers are applied. While the linear systems here are solved with simple LU decomposition, the reported WUs provide a best-case scaling baseline for comparing the work required with the refinement strategies.

The meshes considered in these numerical simulations utilize quadrilateral elements. Adaptive refinement, then, gives rise to the existence of hanging nodes. These nodes are dealt with in the standard way by constraining their values with the neighboring regular nodes to maintain continuity along the boundary. Additionally, we constrain the number of hanging nodes on a single edge to a maximum of one. If refinement of a given element would violate this requirement, the adjacent node is also refined to enforce the limit. Therefore, in a given refinement step, though a fixed percentage of cells are flagged for refinement, additional elements may be refined. Finally, the theory developed in the preceding sections assumes that the studied mesh satisfies the admissibility property. This requirement is fully satisfied for the coarsest mesh but, with the introduction of hanging nodes, no longer holds after the first AMR stage. While mesh discretizations employing triangular simplices can maintain admissibility with adaptivity, grids composed purely of quadrilateral or cuboid elements cannot. Thus, following the first level of refinement, the error estimator is applied heuristically to guide AMR on the remaining levels.

\subsection{Twisting Nematics in a Square Domain} \label{TwistSection}

\begin{table*}[t]
\centering
\resizebox{\textwidth}{!}{
\begin{tabular}{|c|c|c|c|c|c|c||c|c|c|c|c|c|}
\hlinewd{1.3pt}
 & \multicolumn{3}{|c|}{Penalty (Adapt.)} & \multicolumn{3}{|c||}{Penalty (Uniform)} & \multicolumn{3}{c|}{Lagrangian (Adapt.)} & \multicolumn{3}{|c|}{Lagrangian (Uniform)} \\
\hlinewd{1.3pt}
Level & Energy & Max. Dev. & Min. Dev. & Energy & Max. Dev. & Min. Dev. & Energy & Max. Dev. & Min. Dev. & Energy & Max. Dev. & Min. Dev. \\
\hlinewd{1.3pt}
Grid $1$ & $3.640$ & $-2.714$e-$06$ & $-8.786$e-$05$ & $3.640$ & $-2.714$e-$06$ & $-8.786$e-$05$ & $3.641$ & $3.447$e-$06$ & $-2.689$e-$06$ & $3.641$ & $3.447$e-$06$ & $-2.689$e-$06$ \\
\hline
Grid $2$ & $3.640$ & $-2.954$e-$06$ & $-8.420$e-$05$ & $3.640$ & $-2.954$e-$06$ & $-8.420$e-$05$ & $3.641$ & $2.167$e-$06$ & $-1.962$e-$06$ & $3.641$ & $4.308$e-$07$ & $-5.939$e-$07$ \\
\hline
Grid $3$ & $3.640$ & $-2.110$e-$06$ & $-7.244$e-$05$ & $3.640$ & $-2.110$e-$06$ & $-7.244$e-$05$ & $3.641$ & $3.663$e-$07$ & $-3.266$e-$07$ & $3.641$ & $1.139$e-$07$ & $-1.485$e-$07$ \\
\hline
Grid $4$ & $3.640$ & $-1.108$e-$06$ & $-6.468$e-$05$ & $3.640$ & $-1.108$e-$06$ & $-6.468$e-$05$ & $3.641$ & $2.722$e-$07$ & $-2.482$e-$07$ & $3.641$ & $3.637$e-$08$ & $-4.215$e-$08$ \\
\hline
Grid $5$ & $3.640$ & $-4.521$e-$07$ & $-6.469$e-$05$ & $3.640$ & $-4.521$e-$07$ & $-6.469$e-$05$ & $3.641$ & $4.128$e-$08$ & $-4.030$e-$08$ & $3.641$ & $1.177$e-$08$ & $-1.342$e-$08$ \\
\hline
Grid $6$ & $3.640$ & $-1.573$e-$07$ & $-6.469$e-$05$ & $3.640$ & $-1.573$e-$07$ & $-6.469$e-$05$ & $3.641$ & $2.953$e-$08$ & $-2.824$e-$08$ & $3.641$ & $3.826$e-$09$ & $-4.483$e-$09$ \\
\hlinewd{1.3pt}
Fine DOF & \multicolumn{3}{|c|}{$691, 923$} & \multicolumn{3}{|c||}{$12, 595, 203$} & \multicolumn{3}{c|}{$692, 386$} & \multicolumn{3}{|c|}{$13, 645, 828$} \\
\hline
WUs & \multicolumn{3}{|c|}{$5.224$} & \multicolumn{3}{|c||}{$36.221$} & \multicolumn{3}{c|}{$4.040$} & \multicolumn{3}{|c|}{$30.109$} \\
\hline
Timing & \multicolumn{3}{|c|}{$337.7$s} & \multicolumn{3}{|c||}{$11,904.1$s} & \multicolumn{3}{c|}{$490.3$s} & \multicolumn{3}{|c|}{$15,682.4$s} \\
\hlinewd{1.3pt}
\end{tabular}
}
\caption{\small{Simulation statistics associated with the nematic simulations for the square domain with twisting conditions for both adaptive and uniformly refined mesh hierarchies. The column Fine DOF reflects the number of degrees of freedom on the finest mesh of the NI. The largest director deviations above and below unit length at the quadrature nodes are shown in the Max. Dev. and Min Dev. columns.}}
\label{TwistExperimentStatistics}
\end{table*}

\begin{figure}[h!]
\begin{minipage}{0.99 \linewidth}
\subfloat[]{
\includegraphics[scale=.22, center]{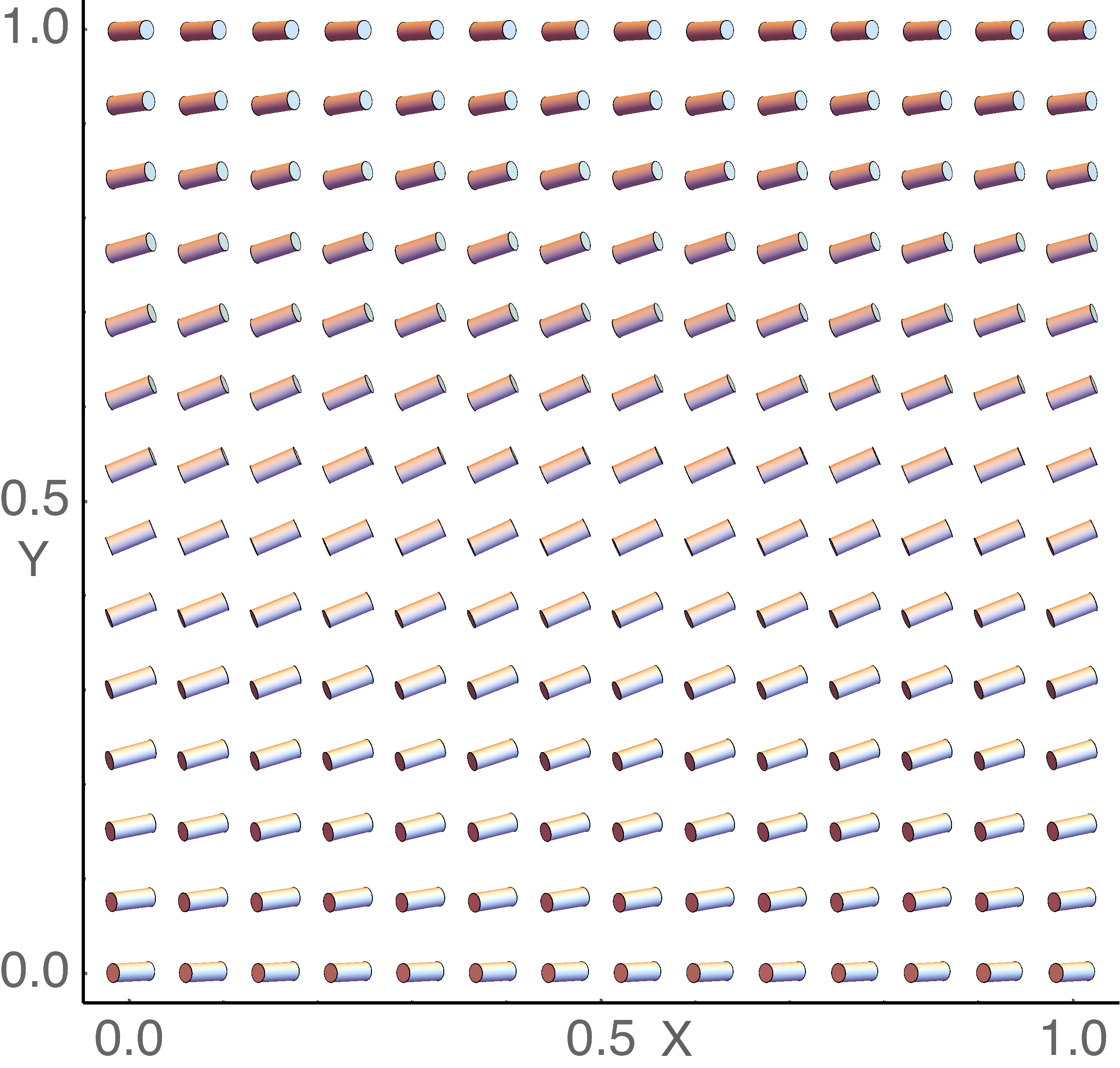}
\label{TwistSquareTwistEnergyDensityPlots:left}
}
\end{minipage}
\begin{minipage}{0.49 \linewidth}
\subfloat[]{ 
\includegraphics[scale=.113, center]{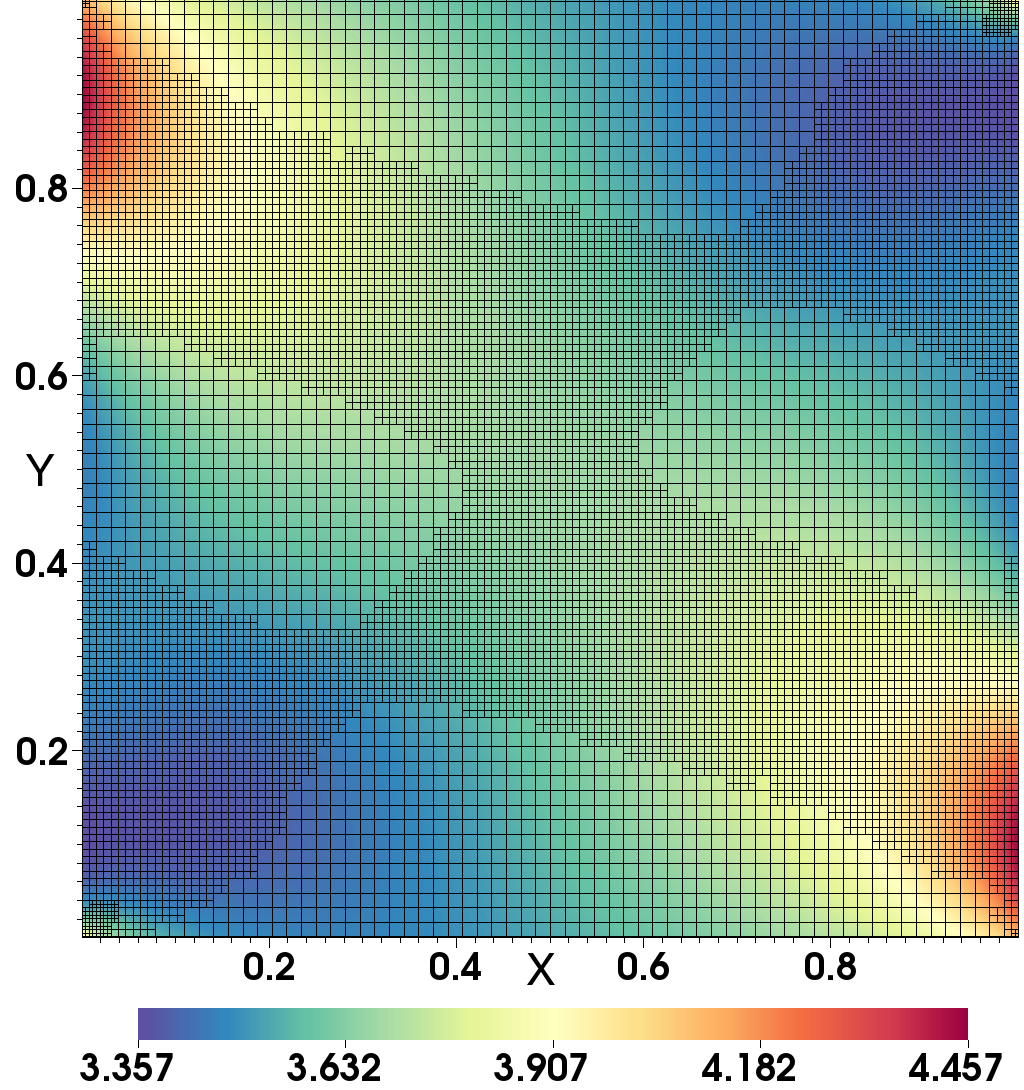}
\label{TwistSquareTwistEnergyDensityPlots:center}
}
\end{minipage}
\begin{minipage}{0.49 \linewidth}
\subfloat[]{
\includegraphics[scale=.113, center]{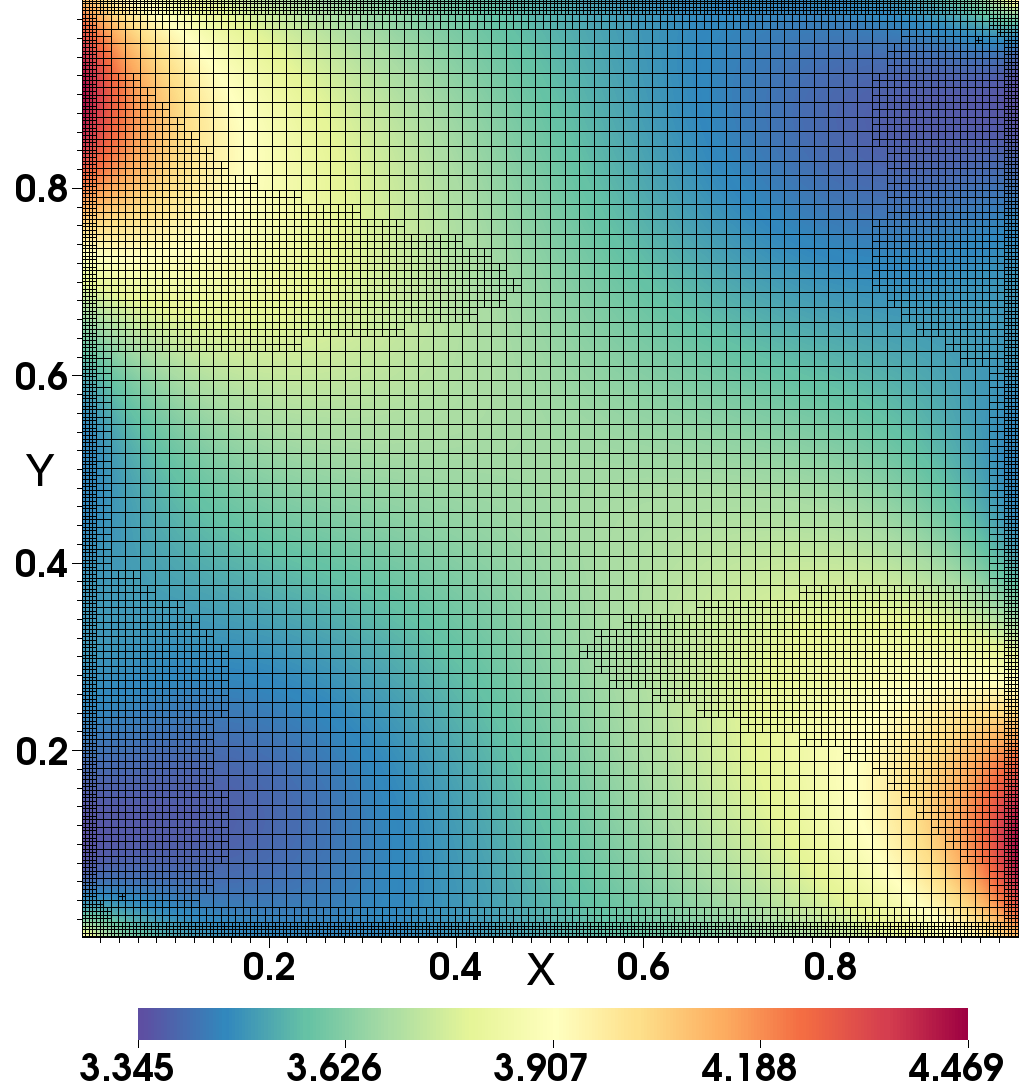}
\label{TwistSquareTwistEnergyDensityPlots:right}
}
\end{minipage}
\caption{\small{\subref{TwistSquareTwistEnergyDensityPlots:left} The final computed solution for the Lagrangian formulation on the adapted mesh (restricted for visualization). Frank-Oseen energy-density function, $w_F$, with overlaid AMR patterns after three refinements for the \subref{TwistSquareTwistEnergyDensityPlots:center} Lagrange multiplier approach and \subref{TwistSquareTwistEnergyDensityPlots:right} penalty method.}}
\label{TwistSquareTwistEnergyDensityPlots}
\end{figure}

The first set of numerical experiments examines a unit-square domain with tilt-twist-type boundary conditions as shown in Figure \ref{TwistSquareTwistEnergyDensityPlots}\subref{TwistSquareTwistEnergyDensityPlots:left}. Along the top and bottom boundaries, the director is rotated counter-clockwise with respect to the positive $x$-axis in the $xz$-plane by a constant angle of $-\frac{\pi}{8}$ at $y=0$ and $\frac{\pi}{8}$ at $y=1$. At the left and right boundaries the director twists to reconcile the director orientations of upper and lower boundaries while also tilting to a maximum angle of $\frac{\pi}{4}$ in the direction of the $y$-axis. The Frank constants for these simulations are set to $K_1 = 1.0$, $K_2 = 3.0$, and $K_3 = 1.2$ and $t_0 = 0$. These parameters are chosen because they have been shown to predispose the nematic towards tilt even in the presence of periodic boundary conditions which do not strongly encourage such a response on the interior of the domain \cite{Emerson3, Leslie2}. The NI mesh hierarchy begins on a uniform $32 \times 32$ mesh and proceeds with five additional levels of either adaptive or uniform mesh refinements. The damping parameter starts at $\alpha = 0.2$ and increases by $0.2$ after each refinement for the Lagrange multiplier approach, and $\alpha = 0.4$ with increases of $0.2$ for the penalty method. Finally, the penalty parameter is set to $\zeta = 10^5$ for experiments applying the penalty method.

Figures \ref{TwistSquareTwistEnergyDensityPlots}\subref{TwistSquareTwistEnergyDensityPlots:center} and \subref{TwistSquareTwistEnergyDensityPlots:right} display the computed free-energy density for the director configuration shown in Figure \ref{TwistSquareTwistEnergyDensityPlots}\subref{TwistSquareTwistEnergyDensityPlots:left}. The director field exhibits significant uniformity in tilt throughout the interior of the domain in agreement with expectations based on the chosen Frank constants. Overlaid on the energy density plots are the AMR patterns resulting from three successive refinement stages based on the error estimator for the Lagrange multiplier approach and the penalty method. While the refinement structures differ with the method applied, regions of emphasis are shared that generally coincide with areas of elevated free energy. The penalty error estimator places more refinement emphasis along the boundary of the domain, with coarser refinement along the interior compared to the Lagrange multiplier estimator. 

Table \ref{TwistExperimentStatistics} details a number of statistics comparing the performance of AMR for each method against a uniform refinement strategy. At the finest level, the uniformly refined mesh includes nearly 20 times more degrees of freedom and requires run times over $30$ times longer to reach the specified tolerance for both approaches. The computed free energies are in good agreement across all meshes. For the Lagrangian method, the unit-length constraint conformance with the AMR technique is competitive with the uniform refinement strategy but slightly looser. On the other hand, AMR for the penalty method maintains nearly identical constraint enforcement compared with the uniform mesh. In either case, indistinguishable solutions are acquired with AMR drastically reducing the computational work necessary to obtain them.

\subsection{Nematics in a Patterned Square Domain}

\begin{table*}[t]
\centering
\resizebox{\textwidth}{!}{
\begin{tabular}{|c|c|c|c|c|c|c||c|c|c|c|c|c|}
\hlinewd{1.3pt}
 & \multicolumn{3}{|c|}{Penalty (Adapt.)} & \multicolumn{3}{|c||}{Penalty (Uniform)}  & \multicolumn{3}{c|}{Lagrangian (Adapt.)} & \multicolumn{3}{|c|}{Lagrangian (Uniform)} \\
\hlinewd{1.3pt}
Level & Energy & Max. Dev. & Min. Dev. & Energy & Max. Dev. & Min. Dev. & Energy & Max. Dev. & Min. Dev. & Energy & Max. Dev. & Min. Dev. \\
\hlinewd{1.3pt}
Grid $1$ & $10.925$ & $2.481$e-$02$ & $-4.144$e-$02$ & $10.925$ & $2.481$e-$02$ & $-4.144$e-$02$ & $10.866$ & $2.920$e-$02$ & $-3.852$e-$02$ & $10.866$ & $2.920$e-$02$ & $-3.852$e-$02$ \\
\hline
Grid $2$ & $10.876$ & $1.207$e-$02$ & $-1.707$e-$02$ & $10.876$ & $1.207$e-$02$ & $-1.707$e-$02$ & $10.875$ & $1.310$e-$02$ & $-1.569$e-$02$ & $10.875$ & $1.310$e-$02$ & $-1.569$e-$02$ \\
\hline
Grid $3$ & $10.866$ & $2.785$e-$03$ & $-3.391$e-$03$ & $10.866$ & $2.785$e-$03$ & $-3.391$e-$03$ & $10.866$ & $3.057$e-$03$ & $-3.213$e-$03$ & $10.866$ & $3.057$e-$03$ & $-3.213$e-$03$ \\
\hline
Grid $4$ & $10.866$ & $8.562$e-$04$ & $-5.825$e-$04$ & $10.866$ & $8.562$e-$04$ & $-5.825$e-$04$ & $10.866$ & $4.064$e-$04$ & $-4.878$e-$04$ & $10.866$ & $4.064$e-$04$ & $-4.878$e-$04$ \\
\hline
Grid $5$ & $10.866$ & $6.805$e-$04$ & $-3.390$e-$04$ & $10.866$ & $6.805$e-$04$ & $-3.390$e-$04$ & $10.866$ & $7.904$e-$05$ & $-9.432$e-$05$ & $10.866$ & $7.904$e-$05$ & $-9.432$e-$05$ \\
\hline
Grid $6$ & $10.867$ & $5.962$e-$04$ & $-2.918$e-$04$ & $10.867$ & $5.962$e-$04$ & $-2.918$e-$04$ & $10.867$ & $7.837$e-$05$ & $-9.416$e-$05$ & $10.867$ & $7.837$e-$05$ & $-9.416$e-$05$ \\
\hlinewd{1.3pt}
Fine DOF & \multicolumn{3}{|c|}{$653,019$} & \multicolumn{3}{|c||}{$12,595,203$} & \multicolumn{3}{c|}{$700, 203$} & \multicolumn{3}{|c|}{$13,645,828$} \\
\hline
WUs & \multicolumn{3}{|c|}{$12.306$} & \multicolumn{3}{|c||}{$69.640$} & \multicolumn{3}{c|}{$6.864$} & \multicolumn{3}{|c|}{$63.672$} \\
\hline
Timing & \multicolumn{3}{|c|}{$583.7$s} & \multicolumn{3}{|c||}{$21,177.6$s} & \multicolumn{3}{c|}{$600.3$s} & \multicolumn{3}{|c|}{$48,383.3$s} \\
\hlinewd{1.3pt}
\end{tabular}
}
\caption{\small{Simulation statistics associated with the nematic simulations for the patterned square domain for both adaptive and uniformly refined mesh hierarchies. The column Fine DOF reflects the number of degrees of freedom on the finest mesh of the NI. The largest director deviations above and below unit length at the quadrature nodes are shown in the Max. Dev. and Min Dev. columns.}}
\label{PatternedExperimentStatistics}
\end{table*}

\begin{figure}[h!]
\begin{minipage}{0.99 \linewidth}
\subfloat[]{
\includegraphics[scale=.205, center]{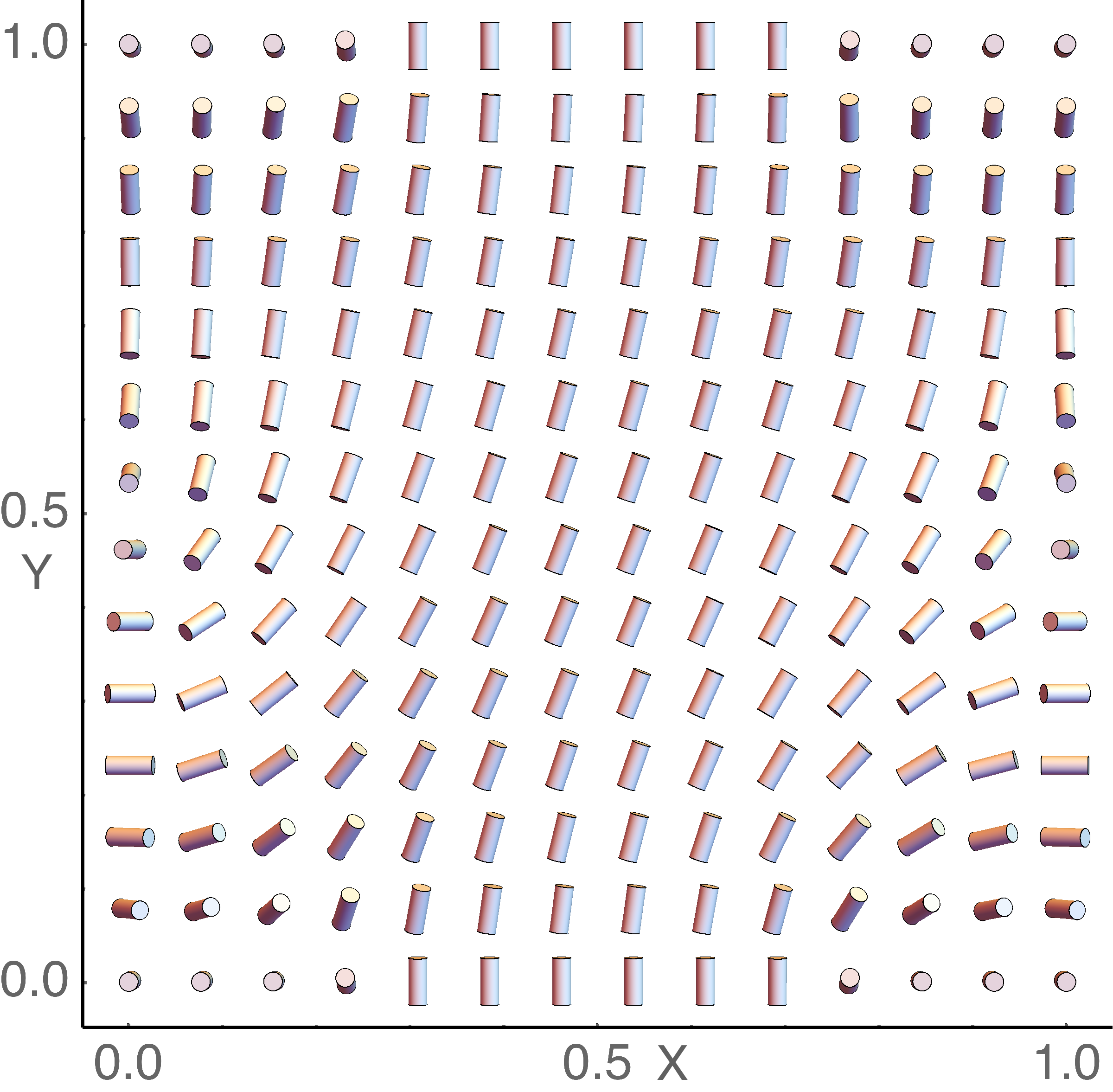}
  \label{PatternedSquareEnergyDensityPlots:left}
}
\end{minipage}
\begin{minipage}{0.49 \linewidth}
\subfloat[]{ 
  \includegraphics[scale=.113, center]{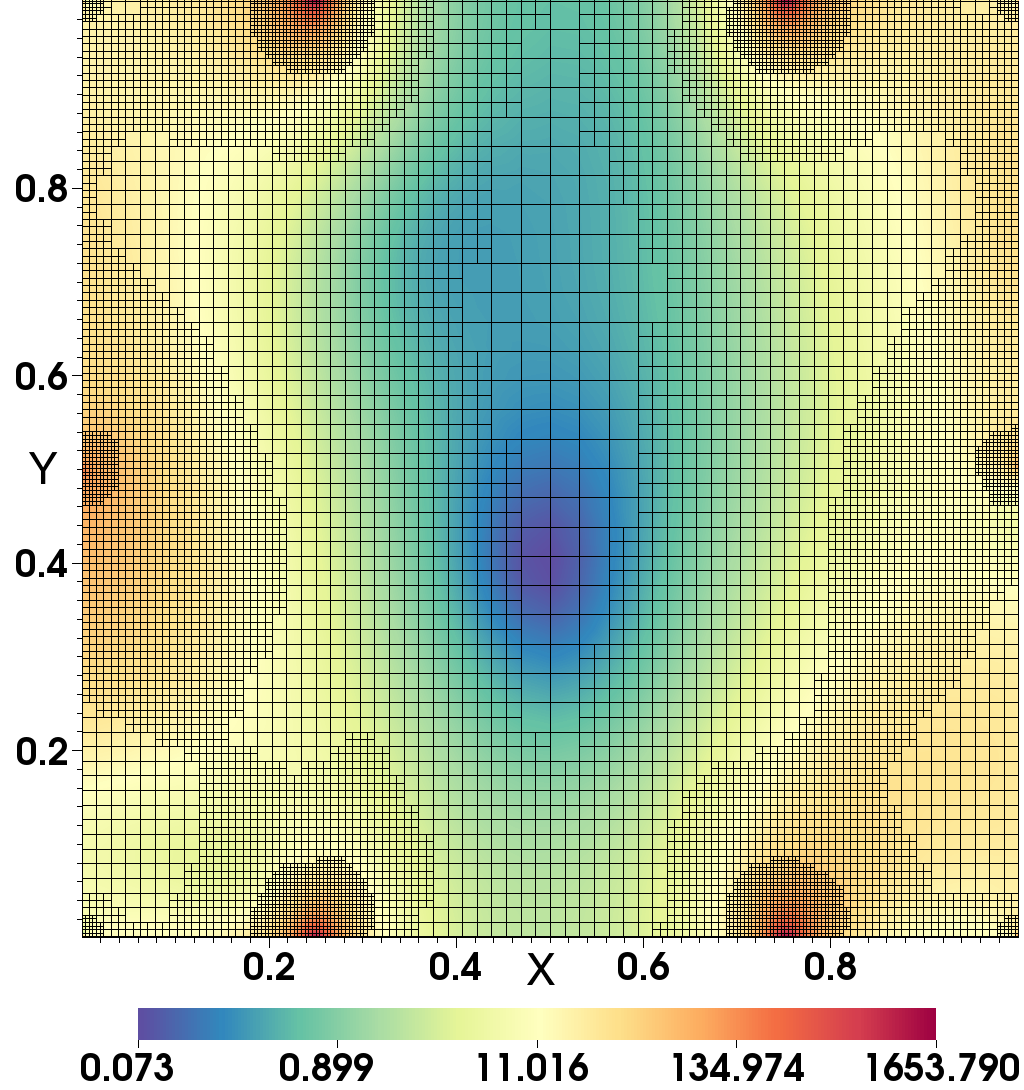}
  \label{PatternedSquareEnergyDensityPlots:center}
}
\end{minipage}
\begin{minipage}{0.49 \linewidth}
\subfloat[]{
  \includegraphics[scale=.113, center]{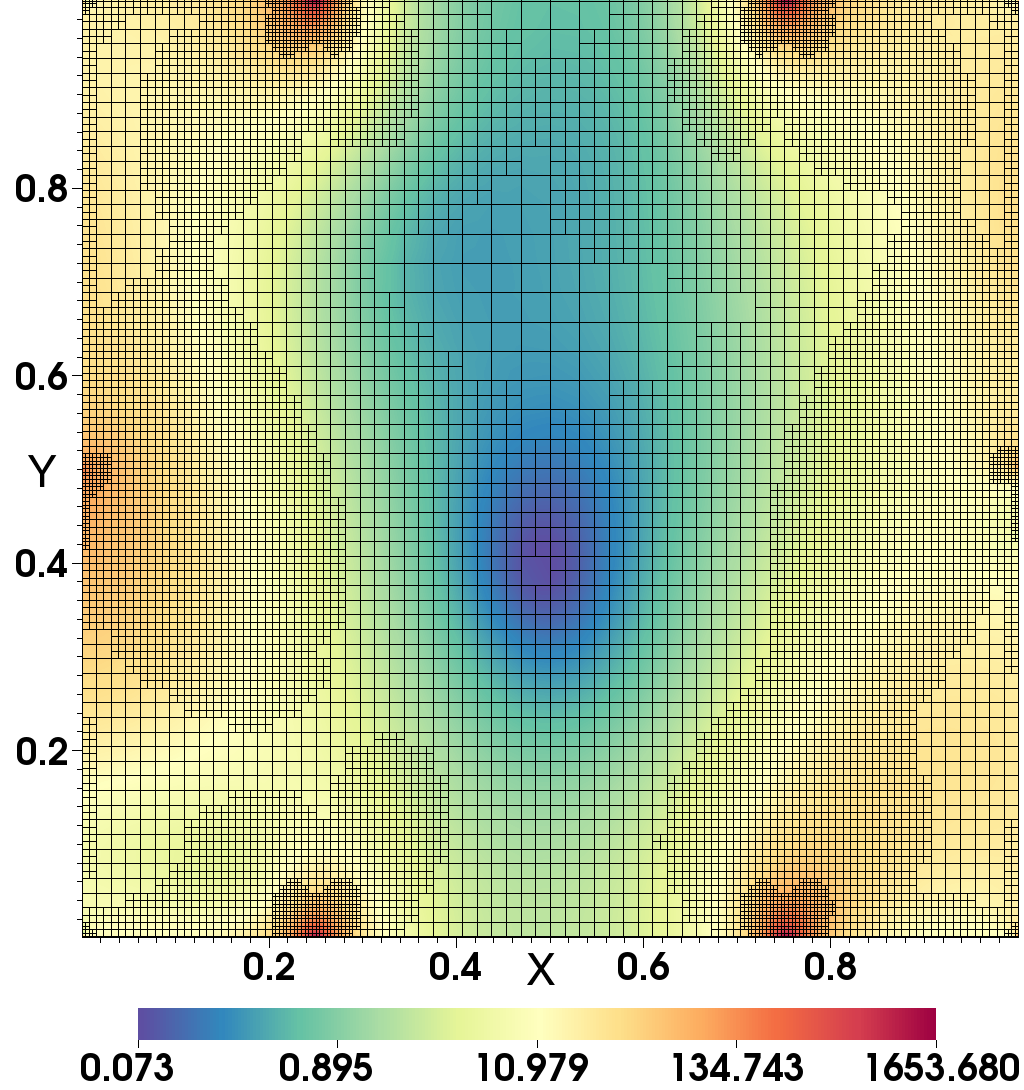}
  \label{PatternedSquareEnergyDensityPlots:right}
}
\end{minipage}
\caption{\small{\subref{PatternedSquareEnergyDensityPlots:left} The final computed solution for the Lagrangian formulation on the adapted mesh (restricted for visualization). Frank-Oseen energy-density function, $w_F$, with overlaid AMR patterns after three refinements for \subref{PatternedSquareEnergyDensityPlots:center} Lagrange multiplier approach and \subref{PatternedSquareEnergyDensityPlots:right} penalty method.}}
\label{PatternedSquareEnergyDensityPlots}
\end{figure}

The simulations in this section consider the square domain with patterned boundary conditions as shown in Figure \ref{PatternedSquareEnergyDensityPlots}\subref{PatternedSquareEnergyDensityPlots:left}. Along the boundary, the patterning induces a number of points where the director field abruptly undergoes orientational transitions leading to areas of elevated free energy. The Frank constants for these simulations are $K_1 = 1.0$, $K_2 = 0.629$, and $K_3 = 1.323$, corresponding to the non-dimensionalized constants of $5$CB \cite{Stewart1} and $t_0 = 0$. The NI hierarchy again begins on a uniform $32 \times 32$ mesh and proceeds with five additional levels of either adaptive or uniform mesh refinements. The damping parameter begins at $\alpha = 0.2$ and increases by $0.2$ after each refinement for both methods. The penalty parameter is set to $\zeta = 10^6$, where appropriate.

As in the previous experiment, the error estimators generate similar AMR patterns. Each estimator emphasizes refinement near areas of behavioral transition while leaving the central region, which contains a relatively homogeneous director field, as a set of coarser elements. Figures \ref{PatternedSquareEnergyDensityPlots}\subref{PatternedSquareEnergyDensityPlots:center} and \subref{PatternedSquareEnergyDensityPlots:right} exhibit the resulting refinement pattern after three levels of flagging and refinement. The mesh patterns display some asymmetry along both the $x$ and $y$ axes, mirroring the asymmetry of the free-energy density and director field produced by the boundary conditions.  As seen in the previous simulation, the error estimator associated with the penalty method places some additional value in refinement along the boundary. However, the difference between the two patterns in this experiment is less pronounced. 

The statistics for each method comparing AMR guided by the error estimators to a uniform refinement strategy are shown in Table \ref{PatternedExperimentStatistics} and exhibit significant efficiency improvements with adaptive refinement while maintaining nearly identical performance with regard to computed free energy and unit-length constraint conformance. Each method, regardless of refinement strategy applied, finds the same free energy for the computed equilibrium configuration. Furthermore, constraint enforcement is identical for both adaptive refinement experiments when compared to the performance of the uniformly refined meshes. For this simulation, the difference in consumed work units between the adaptive and uniformly refined mesh experiments is even higher than that in the previous section.  

\subsection{Cholesteric in Elliptic Domain}

\begin{figure*}[t]
\begin{minipage}[t]{0.33 \linewidth}
\subfloat[]{\raisebox{1.3em}{
  \includegraphics[scale=.165, center]{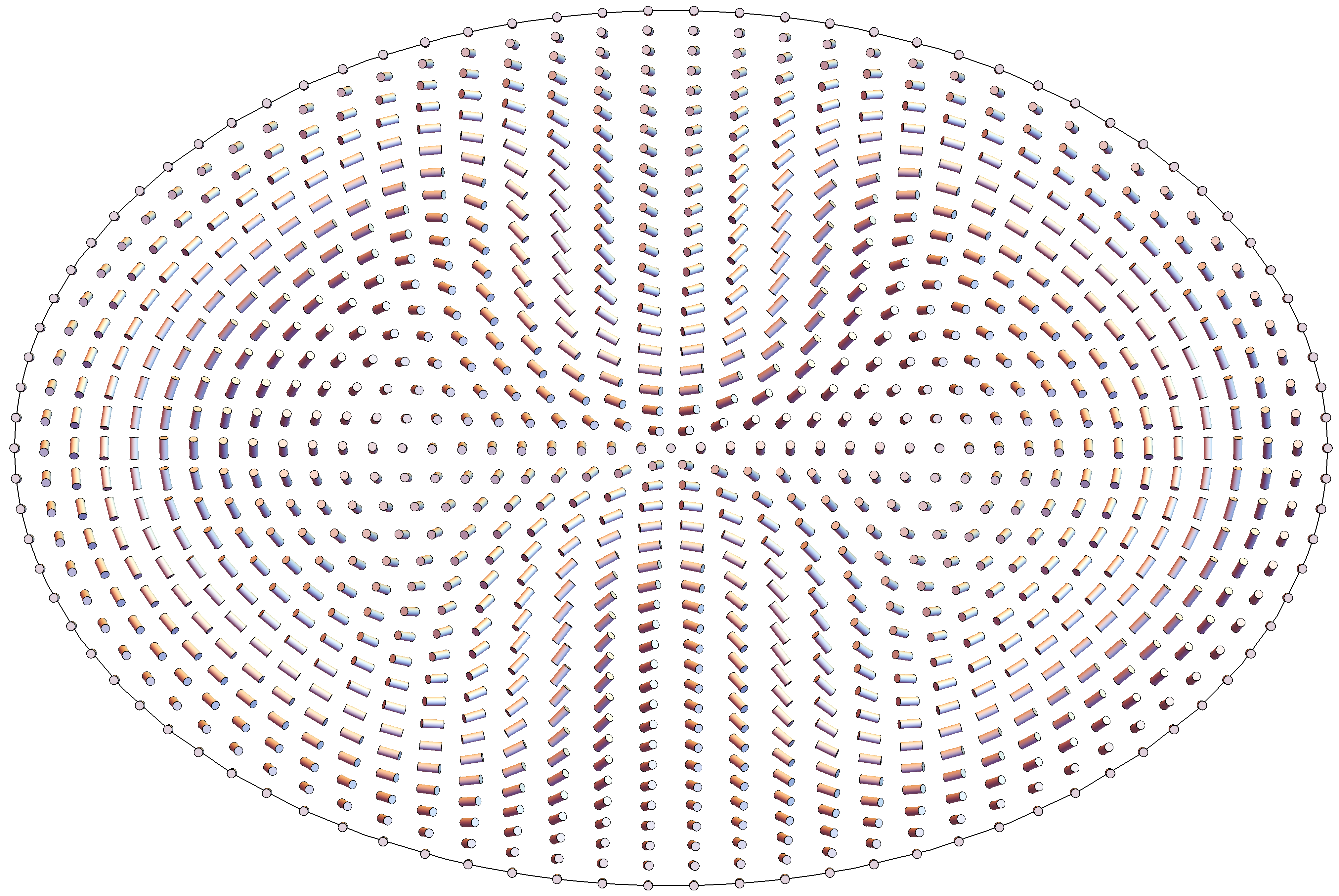}}
  \label{CholestercEllipsePlots:left1}
}
\end{minipage}
\begin{minipage}[t]{0.33 \linewidth}
\subfloat[]{
  \includegraphics[scale=.17, center]{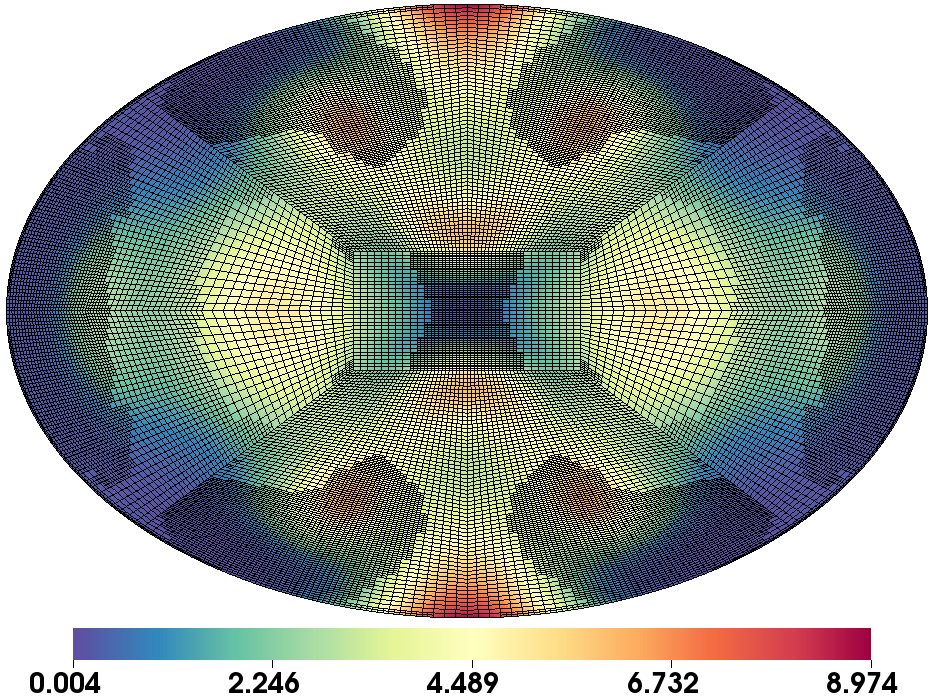}
  \label{CholestercEllipsePlots:center1}
}
\end{minipage}
\begin{minipage}[t]{0.33 \linewidth}
\subfloat[]{
  \includegraphics[scale=.17, center]{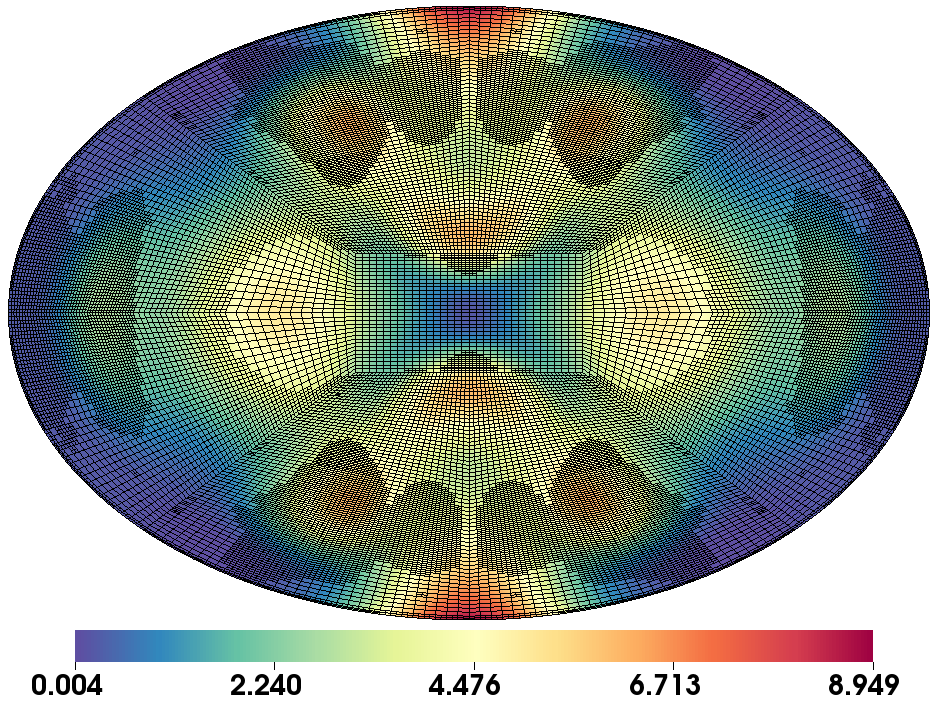}
  \label{CholestercEllipsePlots:right1}
}
\end{minipage}
\begin{minipage}[t]{0.33 \linewidth}
\subfloat[]{\raisebox{1.3em}{
  \includegraphics[scale=.165, left]{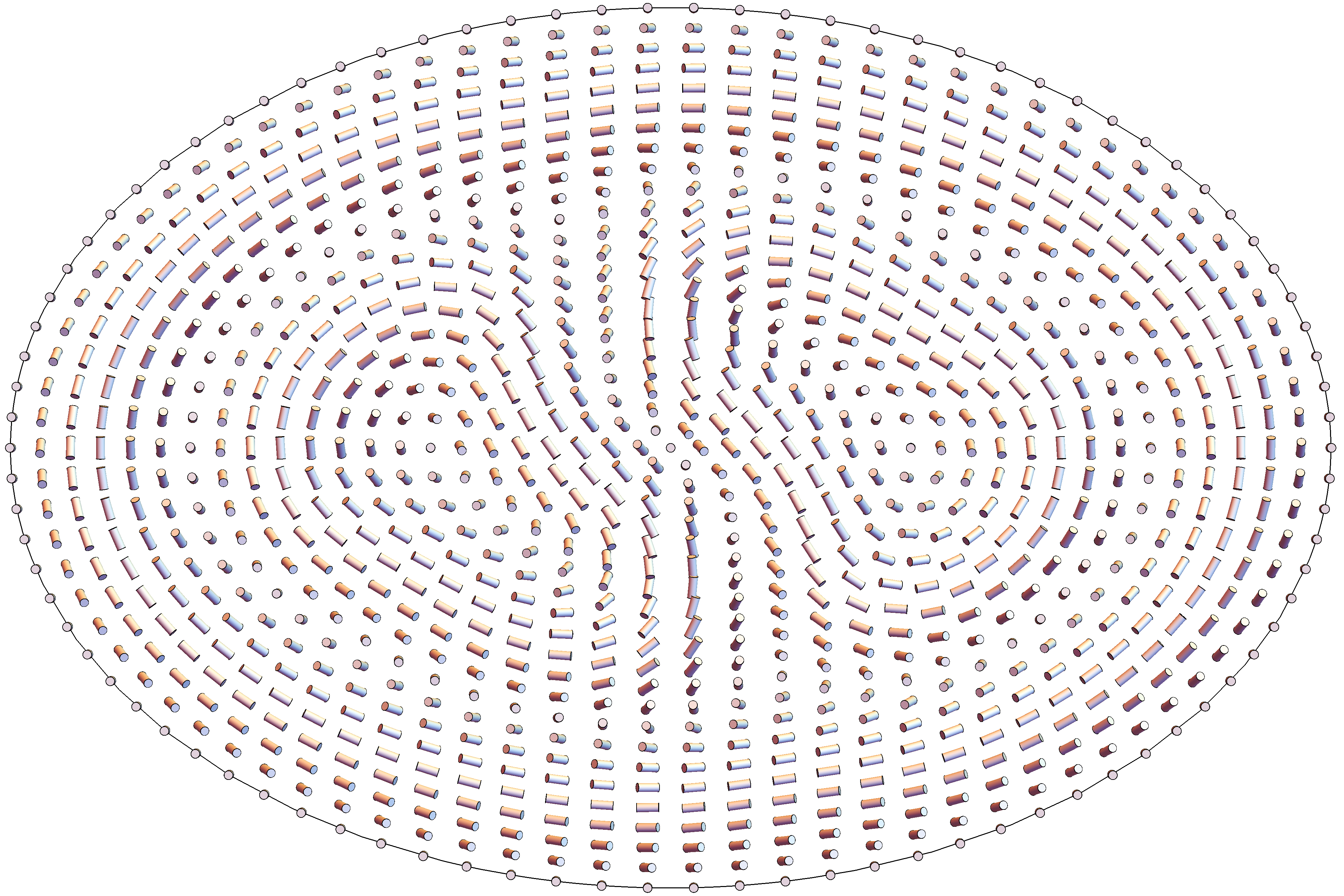}}
  \label{CholestercEllipsePlots:left2}
}
\end{minipage}
\begin{minipage}[t]{0.33 \linewidth}
\subfloat[]{
  \includegraphics[scale=.17, center]{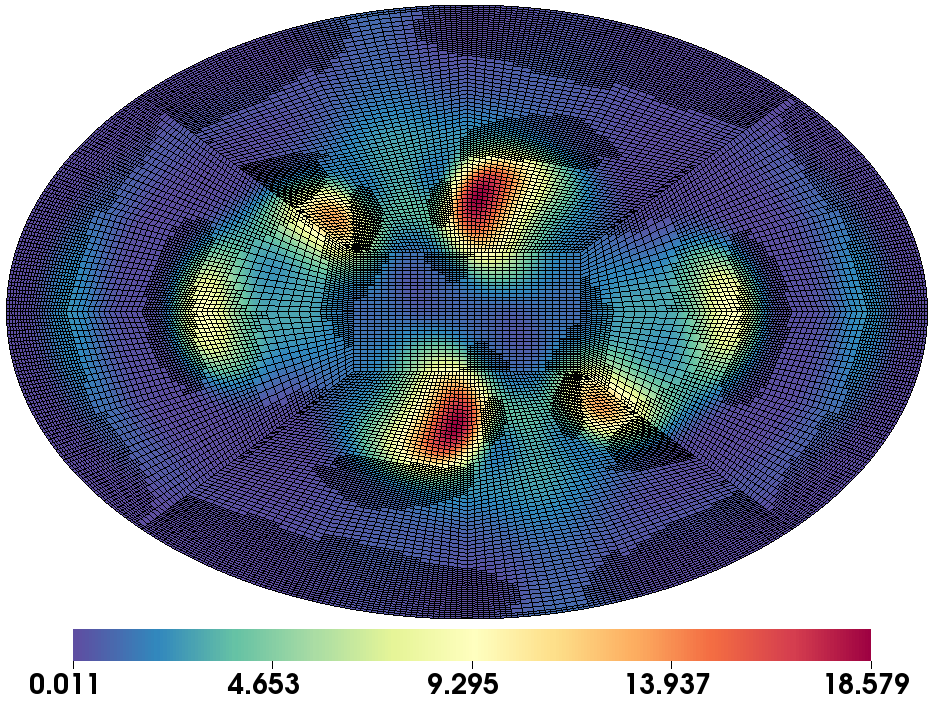}
  \label{CholestercEllipsePlots:center2}
}
\end{minipage}
\begin{minipage}[t]{0.33 \linewidth}
\subfloat[]{
  \includegraphics[scale=.17, center]{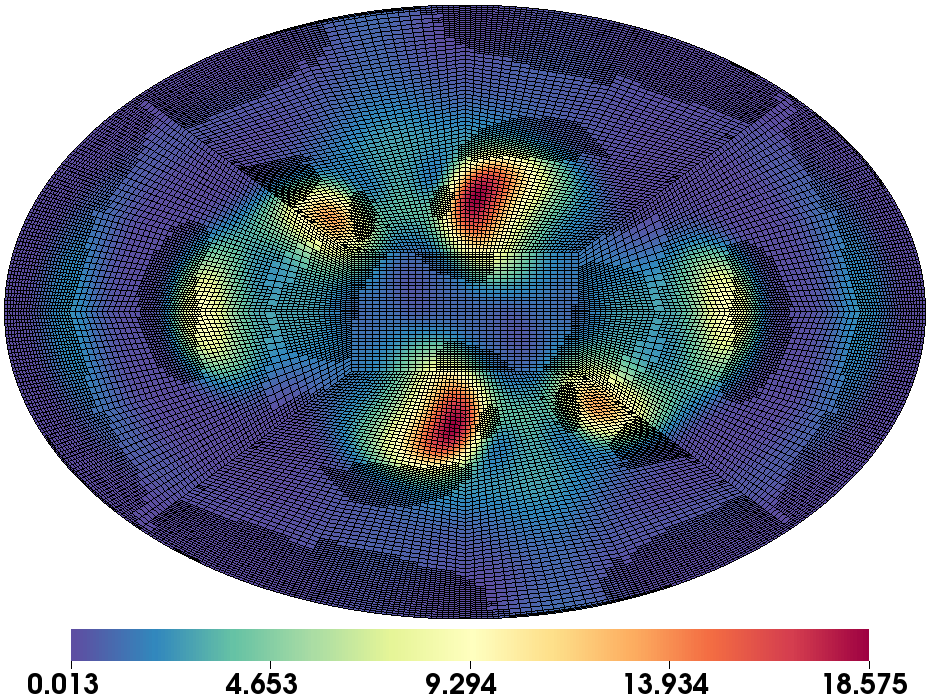}
  \label{CholestercEllipsePlots:right2}
}
\end{minipage}
\caption{\small{\subref{CholestercEllipsePlots:left1}, \subref{CholestercEllipsePlots:left2} Computed solutions for the Lagrangian formulation on the finest adapted mesh (restricted for visualization). The Frank-Oseen energy-density function, $w_F$, with overlaid AMR patterns after three refinements is shown for the Lagrange multiplier approach in \subref{CholestercEllipsePlots:center1}, \subref{CholestercEllipsePlots:center2} and the penalty method in \subref{CholestercEllipsePlots:right1}, \subref{CholestercEllipsePlots:right2}. The top row corresponds to results for $t_0 = 6.0$ while the bottom shows $t_0 = 8.0.$}}
\label{CholestercEllipsePlots}
\end{figure*}

\begin{table*}[t]
\centering
\resizebox{\textwidth}{!}{
\begin{tabular}{|c|c|c|c|c|c|c||c|c|c|c|c|c|}
\hlinewd{1.3pt}
 & \multicolumn{3}{|c|}{Penalty (Adapt.)} & \multicolumn{3}{|c||}{Penalty (Uniform)}  & \multicolumn{3}{c|}{Lagrangian (Adapt.)} & \multicolumn{3}{|c|}{Lagrangian (Uniform)} \\
\hlinewd{1.3pt}
Level & Energy & Max. Dev. & Min. Dev. & Energy & Max. Dev. & Min. Dev. & Energy & Max. Dev. & Min. Dev. & Energy & Max. Dev. & Min. Dev. \\
\hlinewd{1.3pt}
Grid $1$ & $11.986$ & $9.653$e-$04$ & $-5.880$e-$04$ & $11.986$ & $9.653$e-$04$ & $-5.880$e-$04$ & $11.990$ & $1.487$e-$03$ & $-1.500$e-$03$ & $11.990$ & $1.487$e-$03$ & $-1.500$e-$03$ \\
\hline
Grid $2$ & $11.985$ & $5.912$e-$04$ & $-2.368$e-$04$ & $11.995$ & $3.690$e-$04$ & $-6.255$e-$05$ & $12.002$ & $7.333$e-$04$ & $-6.244$e-$04$ & $12.007$ & $2.170$e-$04$ & $-2.185$e-$04$ \\
\hline
Grid $3$ & $11.998$ & $3.015$e-$04$ & $7.267$e-$06$ & $12.000$ & $2.295$e-$04$ & $1.998$e-$05$ & $12.009$ & $2.213$e-$04$ & $-1.984$e-$04$ & $12.012$ & $2.800$e-$05$ & $-2.847$e-$05$ \\
\hline
Grid $4$ & $12.001$ & $2.443$e-$04$ & $1.767$e-$05$ & $12.002$ & $2.249$e-$04$ & $1.562$e-$05$ & $12.013$ & $6.920$e-$05$ & $-6.384$e-$05$ & $12.013$ & $3.553$e-$06$ & $-3.583$e-$06$ \\
\hline
Grid $5$ & $12.002$ & $2.296$e-$04$ & $1.349$e-$05$ & $12.002$ & $2.244$e-$04$ & $1.026$e-$05$ & $12.013$ & $2.607$e-$05$ & $-2.539$e-$05$ & $12.014$ & $4.458$e-$07$ & $-4.480$e-$07$ \\
\hline
Grid $6$ & $12.002$ & $2.249$e-$04$ & $8.741$e-$06$ & $-$ & $-$ & $-$ & $12.014$ & $5.283$e-$06$ & $-4.739$e-$06$ & $-$ & $-$ & $-$ \\
\hline
\hlinewd{1.3pt}
Fine DOF & \multicolumn{3}{|c|}{$814, 575$} & \multicolumn{3}{|c||}{$3,935,235$} & \multicolumn{3}{c|}{$879,319$} & \multicolumn{3}{|c|}{$4, 263, 428$} \\
\hline
WUs & \multicolumn{3}{|c|}{$5.580$} & \multicolumn{3}{|c||}{$17.494$} & \multicolumn{3}{c|}{$4.314$} & \multicolumn{3}{|c|}{$14.819$} \\
\hline
Timing & \multicolumn{3}{|c|}{$628.4$s} & \multicolumn{3}{|c||}{$3, 158.8$s} & \multicolumn{3}{c|}{$977.4$s} & \multicolumn{3}{|c|}{$13, 047.8$s} \\
\hline
\end{tabular}
}
\caption{\small{Simulation statistics associated with the cholesteric confined in an elliptic domain with aspect ratio $1.5$ and $t_0 = 6.0$ for both adaptive and uniformly refined mesh hierarchies. The column Fine DOF reflects the number of degrees of freedom on the finest mesh of the NI. The largest director deviations above and below unit length at the quadrature nodes are shown in the Max. Dev. and Min Dev. columns.}}
\label{Ellipse1p65q6ExperimentStatistics}
\end{table*} 

The two numerical experiments of this section examine the performance of the error estimators on ellipse-type meshes. Note that the ellipses are approximated with quadrilateral meshes such that the presence of a convex polyhedral boundary, assumed in the theory, remains in place. However, after cells are flagged for refinement,  boundary elements are refined by splitting the element and placing the newly created edge node at the appropriate position on the true boundary of the ellipse to better approximate the boundary shape. Therefore, the newly created grid is not a strictly refined subdomain of the previous coarse domain. Both problems impose Dirichlet conditions such that $\director = (0, 0, 1)$ along the entire boundary. NI consists of a coarse mesh containing $1,313$ elements followed by five consecutive adaptive or uniform refinements. The elliptic domain being modeled has an aspect ratio and major axis of $1.5$. 

For these simulations, cholesteric liquid crystals with $t_0 = 6.0$ and $t_0 = 8.0$, respectively, are considered. The Frank constants are $K_1 = K_2 = K_3 = 1.0$ for the first, and $K_1 = 1.0$, $K_2 = 3.0$, and $K_3 = 1.2$ in the second. Coupling these parameters with confinement of the cholesteric in the ellipse leads to the presence of distorted equilibrium director fields on the domain interior due to geometric frustration \cite{Ackerman1, Emerson7}, offering uniquely challenging behavior on which to examine the performance of the a posteriori error estimators. For all simulations below, the Newton damping parameter begins at $\alpha = 0.3$ and increases by $0.2$ after each refinement in the NI process. The penalty parameter is set to $\zeta = 10^5$.

\begin{table*}[t]
\centering
\resizebox{\textwidth}{!}{
\begin{tabular}{|c|c|c|c|c|c|c||c|c|c|c|c|c|}
\hlinewd{1.3pt}
 & \multicolumn{3}{|c|}{Penalty (Adapt.)} & \multicolumn{3}{|c||}{Penalty (Uniform)}  & \multicolumn{3}{c|}{Lagrangian (Adapt.)} & \multicolumn{3}{|c|}{Lagrangian (Uniform)} \\
\hlinewd{1.3pt}
Level & Energy & Max. Dev. & Min. Dev. & Energy & Max. Dev. & Min. Dev. & Energy & Max. Dev. & Min. Dev. & Energy & Max. Dev. & Min. Dev. \\
\hlinewd{1.3pt}
Grid $1$ & $8.726$ & $2.219$e-$03$ & $-1.728$e-$03$ & $8.726$ & $2.219$e-$03$ & $-1.728$e-$03$ & $8.774$ & $2.931$e-$03$ & $-3.067$e-$03$ & $8.774$ & $2.931$e-$03$ & $-3.067$e-$03$ \\
\hline
Grid $2$ & $8.677$ & $1.370$e-$03$ & $-8.866$e-$04$ & $8.669$ & $7.358$e-$04$ & $-2.103$e-$04$ & $8.738$ & $3.124$e-$03$ & $-2.222$e-$03$ & $8.732$ & $4.135$e-$04$ & $-4.472$e-$04$ \\
\hline
Grid $3$ & $8.669$ & $7.357$e-$04$ & $-1.762$e-$04$ & $8.667$ & $6.586$e-$04$ & $1.376$e-$05$ & $8.730$ & $1.218$e-$03$ & $-8.633$e-$04$ & $8.730$ & $5.774$e-$05$ & $-5.709$e-$05$ \\
\hline
Grid $4$ & $8.668$ & $7.281$e-$04$ & $-4.570$e-$05$ & $8.668$ & $6.528$e-$04$ & $3.009$e-$05$ & $8.731$ & $3.564$e-$04$ & $-2.640$e-$04$ & $8.731$ & $7.503$e-$06$ & $-7.457$e-$06$ \\
\hline
Grid $5$ & $8.668$ & $6.586$e-$04$ & $3.008$e-$05$ & $8.668$ & $6.523$e-$04$ & $3.043$e-$05$ & $8.731$ & $1.430$e-$04$ & $-1.165$e-$04$ & $8.731$ & $9.534$e-$07$ & $-9.444$e-$07$ \\
\hline
Grid $6$ & $8.668$ & $6.528$e-$04$ & $2.119$e-$05$ & $-$ & $-$ & $-$ & $8.731$ & $3.555$e-$05$ & $-3.161$e-$05$ & $-$ & $-$ & $-$ \\
\hline
\hlinewd{1.3pt}
Fine DOF & \multicolumn{3}{|c|}{$810, 687$} & \multicolumn{3}{|c||}{$3,935,235$} & \multicolumn{3}{c|}{$877, 409$} & \multicolumn{3}{|c|}{$4,263,428$} \\
\hline
WUs & \multicolumn{3}{|c|}{$7.221$} & \multicolumn{3}{|c||}{$20.273$} & \multicolumn{3}{c|}{$5.845$} & \multicolumn{3}{|c|}{$19.328$} \\
\hline
Timing & \multicolumn{3}{|c|}{$735.8$s} & \multicolumn{3}{|c||}{$3,449.5$s} & \multicolumn{3}{c|}{$1,243.4$s} & \multicolumn{3}{|c|}{$15,918.3$s} \\
\hlinewd{1.3pt}
\end{tabular}
}
\caption{\small{Simulation statistics associated with the cholesteric confined in an elliptic domain with aspect ratio $1.5$ and $t_0 = 8.0$ for both adaptive and uniformly refined mesh hierarchies. The column Fine DOF reflects the number of degrees of freedom on the finest mesh of the NI. The largest director deviations above and below unit length at the quadrature nodes are shown in the Max. Dev. and Min Dev. columns.}}
\label{Ellipseq8ExperimentStatistics}
\end{table*}

Figures \ref{CholestercEllipsePlots}\subref{CholestercEllipsePlots:left1} and \subref{CholestercEllipsePlots:left2} display plots of the computed director field for the two simulations. As anticipated, confinement of the cholesterics within the elliptical boundary produces equilibrium configurations with substantial deformations on the interior of the domain. Each calculated solution exhibits challenging patterns. As can be seen in the remaining figures, some of the largest free-energy density values are located away from the domain boundary. Figures \ref{CholestercEllipsePlots}\subref{CholestercEllipsePlots:center1} and \subref{CholestercEllipsePlots:center2} display the free-energy density and AMR patterns for the Lagrange multiplier approach. Correspondingly, Figures \ref{CholestercEllipsePlots}\subref{CholestercEllipsePlots:right1} and \subref{CholestercEllipsePlots:right2} show the energy density and mesh patterns resulting from the penalty method. While refinement, in both cases, occurs along the boundary of the domain, portions of refinement also trace the regions of highest energy density and areas where the director field behavior changes dramatically. Moreover, the refined grids mirror the symmetry and some of the shape of $\director$.

Table \ref{Ellipse1p65q6ExperimentStatistics} details statistics for simulations with $t_0 = 6.0$. Results for the uniform refinement experiments are limited to four mesh levels as the memory overhead associated with constructing and solving the linearization systems for an additional refinement level is prohibitive. For both the penalty and Lagrange multiplier approaches, the computed free energies on the adapted meshes are close to those of the uniformly refined grids throughout the NI process and are in full agreement on the finest levels. Additionally, even without the final level of uniform refinement, the AMR hierarchies require less than $30\%$ of the total WUs used by the associated uniform mesh. 

Due to the complexity of the director field, enforcement of the unit-length constraint is more challenging. While still maintaining relatively strict adherence to unit length, conformance for the Lagrange multiplier approach on the finest adaptively refined grid trails the fifth uniformly refined mesh by approximately an order of magnitude. However, the constraint conformance is comparable to the solution computed on the fourth uniform mesh, which still requires $4.700$ WUs to construct compared to the $4.314$ consumed for the AMR solution. Moreover, the solution on the adapted mesh more accurately captures the free energy. With the penalty method, adherence to the constraint for the AMR scheme is quite comparable to the uniformly refined meshes. It should be noted that, by the nature of the penalty method, unit-length conformance is also dependent on the penalty parameter. For a fixed $\zeta$, a certain amount of constraint violation may result in an energetically advantageous director field. This is most likely a contributing factor to the slower rate of improvement in pointwise constraint compliance with either refinement strategy. Nevertheless, the penalty method with AMR closely tracks the performance of the uniform grids.

Data associated with the cholesteric simulations for $t_0 = 8.0$ is presented in Table \ref{Ellipseq8ExperimentStatistics}. As in the $t_0 = 6.0$ case, the constraint conformance of the computed equilibrium solution on the finest uniform mesh for the Lagrange multiplier approach is approximately an order of magnitude better than the finest adapted mesh. Nonetheless, the computed free energies on each level for the adaptively refined mesh very closely match those calculated with the uniformly refined grids. Moreover, the uniform refinement study consumes more than three times as many WUs and takes 12 times longer to finish with the current solvers. The penalty method with AMR is, again, extremely competitive with the uniform mesh hierarchy in each metric while drastically reducing computational work.

\section{Conclusions and Future Work} \label{conclusions}

In this paper, we have derived a posteriori error estimators for solutions to the nonlinear first-order optimality conditions of the Frank-Oseen model of cholesteric and nematic liquid crystal arising in the context of a penalty method and a Lagrange multiplier approach. Theory demonstrating the reliability of the error estimator for the penalty method was proven, and a discussion of current work to fully extend the results to the estimator for the Lagrange multiplier formulation was presented. In both cases, the error estimators represent readily computable quantities on each element of a finite-element mesh and, thus, a straightforward AMR strategy implemented within a nested iteration framework was proposed and investigated numerically. 

Four numerical experiments were conducted comparing the efficiency and performance of the AMR scheme based on the derived error estimators to a simple uniform refinement strategy. The adaptively refined meshes resulted in significant reductions in required computational work while maintaining favorable accuracy for the tracked statistics. In the first two simulations, solutions produced on the adaptively refined mesh are in near agreement with those found on the uniform grids for both constraint enforcement approaches but with more than a five-fold reduction in consumed WUs in each case. For the elliptical domain experiments, the Lagrange multiplier formulation with AMR generates solutions with free-energy values matching those found on uniform mesh but trails slightly in quality of pointwise unit-length enforcement. On the other hand, solutions computed with the penalty method on the adapted meshes continue to be very competitive with the uniform grids in all aspects.

As discussed above, future work will include expanding the theory presented here to demonstrate the reliability of the a posteriori error estimator proposed for the Lagrange multiplier approach, which was examined numerically herein. In addition, we aim to develop a theoretical framework to show that the derived estimators also constitute efficient estimates of approximation error for both the Lagrange multiplier and penalty formulations. Finally, studies considering an optimal choice of refinement percentage at each refinement level in the NI hierarchy will be undertaken.

\section*{\normalsize Acknowledgments}

The author would like to thank Professors James Adler, Xiaozhe Hu, and Scott MacLachlan for their extremely useful suggestions, guidance, and careful reading.

% You can also use your own biblist
% \begin{biblist}
% \end{biblist}

%\bibliographystyle{plainnat}
%\bibliographystyle{revtex}

\bibliographystyle{plainnat}

\bibliography{AMRPaperArXiV.bbl}

\end{document}